\theoremstyle{definition}
\newtheorem{theorem}{Theorem}[section]
\newtheorem{question}[theorem]{Question}
\newtheorem{corollary}[theorem]{Corollary}
\newtheorem{lemma}[theorem]{Lemma}
\newtheorem{proposition}[theorem]{Proposition}
\newtheorem{theoremx}{Theorem}
\theoremstyle{definition}
\newtheorem{example}[theorem]{Example}
\newtheorem{remark}[theorem]{Remark}
\numberwithin{equation}{subsection}
\newcommand{\m}{\mathfrak{m}}
\newcommand{\NN}{\mathbb{N}}
\newcommand{\ZZ}{\mathbb{Z}}
\newcommand{\QQ}{\mathbb{Q}}
\newcommand{\CC}{\mathbb{C}}
\newcommand{\FF}{\mathbb{F}}
\newcommand{\fg}{\operatorname{fg}}
\newcommand{\rra}{\operatorname{rra}}
\newcommand{\syz}{\operatorname{syz}}
\newcommand{\Spec}{\operatorname{Spec}}
\newcommand{\Hom}{\operatorname{Hom}}
\newcommand{\End}{\operatorname{End}}
\newcommand{\Ext}{\operatorname{Ext}}
\newcommand{\Tor}{\operatorname{Tor}}
\newcommand{\ara}{\operatorname{ara}}
\newcommand{\Sym}{\mathrm{Sym}}
\renewcommand{\geq}{\geqslant}
\renewcommand{\leq}{\leqslant}
\renewcommand{\hom}[3]{\operatorname{Hom}_{#1} ( #2, #3 )}
\newcommand{\ModDif}[3]{\mathscr{P}^{#1}_{#2 | #3}}
\newcommand{\wModDif}[3]{\widehat{\mathscr{P}}^{#1}_{#2 | #3}}
\newcommand{\Diff}[3]{{D}^{#1}_{#2 | #3}}
\newcommand{\WDiff}[2]{{D}^{\mathrm{cpt}}_{#1 | #2}}
\newcommand{\matl}[1]{^{\vee_{#1}}}
\newcommand{\Sec}{\mathrm{Sec}}
\newcommand{\Proj}{\mathrm{Proj}}
\newcommand{\gHom}{{^*\mathrm{Hom}}}
\newcommand{\ilim}{\varinjlim}
\newcommand{\plim}{\varprojlim}
\newcommand{\df}[3]{R^{#1} D_{#2|#3}}
\newcommand{\resa}{^l \!}
\newcommand{\resb}{^r \!}
\title{Derived Functors of Differential Operators}
\author{Jack Jeffries}\thanks{The author was supported by NSF Grant DMS~\#1606353.}
\begin{document}

\begin{abstract}
In their work on differential operators in positive characteristic, Smith and Van den Bergh define and study the derived functors of differential operators; these arise naturally as obstructions to differential operators reducing to positive characteristic. In this paper, we provide formulas for the ring of differential operators as well as these derived functors of differential operators. We apply these descriptions to show that differential operators behave well under reduction to positive characteristic under certain hypotheses. We show that these functors also detect a number of interesting properties of singularities.
\end{abstract}

\maketitle

\section{Introduction}

The notion of differential operators on singular varieties, as defined by Grothendieck~\cite[\S16.8]{EGAIV4}, has attracted interest in algebra for a number of reasons. Questions about the ring structure of differential operators, e.g., finite generation and simplicity, have been well-studied \cite{BGG,LS,SmithVDB,Schwarz}. In Commutative Algebra, differential operators on singularities have found a resurgence of interest due to their connections with $F$-singularities, symbolic powers, and noncommutative resolutions, among other topics \cite{Smith-D, SmithVDB,Survey-SP,AMHNB,FMS}. While most commonly one encounters the ring $D_{R|K}$ of $K$-linear differential operators from a ring $R$ (commutative, with $1$) to itself, differential operators from one module $M$ to another $N$ are defined; the collection of these is denoted $D_{R|K}(M,N)$. As with $\mathrm{Hom}_R(M,-)$, the functor $D_{R|K}(M,-)$ is left-exact, and it admits a sequence of right-derived functors, denoted $R^i D_{R|K}(M,-)$. In particular, $R^0 D_{R|K}(R,R)=D_{R|K}$. These functors are first studied by Smith and Van den Bergh \cite{SmithVDB}, and we refer to the functors $R^i D_{R|K}(R,-)$ as the \emph{Smith--Van den Bergh functors}. In this paper, we explore and establish a number of new results about rings of differential operators and these Smith--Van den Bergh functors.

The Smith--Van den Bergh functors are motivated by the study of differential operators via reduction to positive characteristic. If $R$ is a finitely generated algebra over a perfect field $K$ of positive characteristic, one has an isomorphism
\begin{equation}\label{eq:poschar} D_{R|K} = \bigcup_{e\in \NN} \End_{R^{p^e}} (R),\end{equation}
where $R^{p^e}\subseteq R$ is the subring of $p^e$-th powers of elements of $R$ \cite[Lemma~1.4.8]{Yekutieli}. This isomorphism is quite useful in understanding rings of differential operators in positive characteristic, especially in conjunction with information on the module-theoretic structure given by the Frobenius map on $R$ \cite{PaulSmith,SmithVDB,TakaTaka,FMS}. Conversely, via this isomorphism, properties of singularities detected by the Frobenius map can sometimes be detected by the ring of differential operators; e.g., for $R$ and $K$ as above, $R$ is strongly $F$-regular if and only if $R$ is simple as a left $D_{R|K}$-module ($R$ is \emph{$D_{R|K}$-simple}) and $F$-pure \cite[Theorem~2.2]{Smith-D}; see the reference for definitions.

Given the utility of the isomorphism \eqref{eq:poschar}, one may hope to deduce results about rings of differential operators in characteristic zero from their analogues in positive characteristic. If $R$ is a finitely generated $\QQ$-algebra, we can find a $\ZZ$-algebra $S$ such that $S \otimes_{\ZZ} \QQ \cong R$. To deduce properties of $D_{R|\QQ}$ by reduction to positive characteristic, one needs to relate $D_{R|\QQ}$ to $D_{S/pS |\FF_p}$. In general, for a flat $\ZZ$-algebra $S$ and a field $K$ there is a natural injective map
\begin{equation}\label{eq:reduction}
D_{S | \ZZ} \otimes_{\ZZ} K \longrightarrow D_{S \otimes K | K}
\end{equation}
that is an isomorphism when $K$ has characteristic zero. Thus, a natural question is when the map \eqref{eq:reduction} is an isomorphism for infinitely many, all but finitely many, or all prime fields $\FF_p=K$. Positive answers not only allow for reduction to positive characteristic techniques, but in light of \eqref{eq:poschar}, give a sort of uniformity of the Frobenius map under arithmetic deformations. Smith and Van den Bergh show that the cokernel of \eqref{eq:reduction} when $K=\FF_p$ is the $p$-torsion part of $R^1 D_{R | \ZZ}(R,R)$; see Proposition~\ref{SVDB-reduction} below. With the aim of extending their result \cite[Theorem~1.3]{SmithVDB} on simplicity of rings of differential operators in positive characteristic, Smith and Van den Bergh ask in \cite[Question~5.1.2]{SmithVDB} if the $p$-torsion part of $R^1 D(R,R)$ vanishes when $R \otimes_\ZZ \QQ$ is a direct summand (as a $R \otimes_\ZZ \QQ$-module) of a polynomial ring over $\QQ$.

In this paper, we give two descriptions of the differential operators $D_{R | A}(R,\omega_R)$ and the Smith--Van den Bergh functors $R^i D_{R | A}(R,\omega_R)$ for certain Cohen-Macaulay $A$-algebras with canonical module $\omega_R$; in particular, when $R$ is Gorenstein, these formulas describe the ring of differential operators on $R$ as well as the aforementioned $R^1 D_{R | A}(R,R)$. Our formulas, which we state in the Gorenstein graded $K$-algebra case for convenience, are in terms of local cohomology modules.
\begin{theoremx}\label{THM-A}
	Let $K$ be a field, and $R$ be a Gorenstein graded $K$-algebra of Krull dimension~$d$.
	\begin{enumerate}
		\item\label{A-1} (Theorem~\ref{functors-field}) For all $i \geq 0$, $R^i D_{R|K} (R,R) \cong H^{d+i}_{\Delta}(R\otimes_K R)$, where $\Delta$ is the ideal of the diagonal.
		\item\label{A-2} (Theorem~\ref{second-iso}) If $R$ has an isolated singularity, $d\geq 3$, and $1\leq i \leq d-2$, then $R^i D_{R|K} (R,R) \cong H^{i+1}_{R_+}(D_{R|K})$,
		where $R_+$ is the maximal homogenous ideal.\qedhere
	\end{enumerate}
\end{theoremx}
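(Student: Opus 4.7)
The plan is to prove~\eqref{A-1} by a double application of graded local duality, and then to bootstrap~\eqref{A-2} from~\eqref{A-1} via a composition-of-functors spectral sequence.

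For~\eqref{A-1}, begin with the standard description
$$R^i D_{R|K}(R,R) = \varinjlim_n \Ext^i_R(P^n, R),$$
where $P^n = (R \otimes_K R)/\Delta^{n+1}$ is the module of $n$-th order principal parts, viewed as a finitely generated $R$-module through the first factor. Since $R$ is Gorenstein graded of dimension $d$, graded local duality gives $\Ext^i_R(P^n, R) \cong H^{d-i}_{R_+}(P^n)^{\vee}$, up to a shift by the $a$-invariant, where $\vee$ denotes graded Matlis duality. Because $P^n$ is annihilated by a power of $\Delta$, its support in $S := R \otimes_K R$ lies in the diagonal, so $H^{d-i}_{R_+}(P^n) = H^{d-i}_{\m_S}(P^n)$, where $\m_S$ is the maximal graded ideal of $S$. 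The tensor square $S$ is itself Gorenstein graded of dimension $2d$, with canonical module $\omega_R \otimes_K \omega_R$, so a second application of graded local duality, now over $S$, gives $H^{d-i}_{\m_S}(P^n)^{\vee} \cong \Ext^{d+i}_S(P^n, S)$. Chaining these two identifications and passing to the direct limit produces $R^i D_{R|K}(R,R) \cong H^{d+i}_\Delta(R\otimes_K R)$.

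For~\eqref{A-2}, feed~\eqref{A-1} into the Grothendieck spectral sequence
$$E_2^{p,q} = H^p_{R_+}\bigl(H^q_\Delta(S)\bigr) \Longrightarrow H^{p+q}_{\m_S}(S),$$
valid because $\sqrt{R_+ S + \Delta} = \m_S$ and because on $\Delta$-torsion modules $R_+$-local cohomology agrees with $\m_S$-local cohomology. Several vanishings drastically collapse the $E_2$ page. The abutment is zero outside total degree $2d$, since $S$ is Gorenstein of dimension $2d$. Grade considerations force $H^q_\Delta(S) = 0$ for $q < d$. Part~\eqref{A-1} identifies $H^{d+j}_\Delta(S) \cong R^j D_{R|K}(R,R)$, and the isolated-singularity hypothesis makes $R^j D_{R|K}(R,R)$ an $R_+$-torsion module for $j \geq 1$, since $P^n$ is locally free on the punctured spectrum and so the Smith--Van den Bergh functors vanish there. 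Hence in every total degree $T$ with $d+1 \leq T \leq 2d-1$ only two $E_2$ entries can be nonzero: the bottom-row term $E_2^{T-d,\,d} = H^{T-d}_{R_+}(D_{R|K})$ and the leftmost-column term $E_2^{0,\,T} = R^{T-d}D_{R|K}(R,R)$. A direct check shows that the only possibly nonzero differential connecting any two surviving entries is $d_{i+1}\colon E_{i+1}^{0,\,d+i} \to E_{i+1}^{i+1,\,d}$ with $i = T-d$, and that both entries are unaffected on earlier pages. The forced vanishing of $E_\infty$ in total degrees $d+i$ and $d+i+1$---the latter requiring $d+i+1 \leq 2d-1$, i.e.\ $i \leq d-2$---makes $d_{i+1}$ both injective and surjective, yielding the isomorphism $R^i D_{R|K}(R,R) \cong H^{i+1}_{R_+}(D_{R|K})$.

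The main obstacle is careful bookkeeping. For~\eqref{A-1}, one must track the grading shifts from the $a$-invariant through both local-duality steps so the final isomorphism is canonical rather than merely up to an unspecified shift, and one should verify directly that $R \otimes_K R$ is Gorenstein graded with canonical module $\omega_R \otimes_K \omega_R$. For~\eqref{A-2}, the delicate part is ruling out stray differentials into or out of the two highlighted entries in the asserted range; this reduces to observing that every other source or target is either $H^p_{R_+}$ of an $R_+$-torsion module with $p \geq 1$, or $H^q_\Delta(S)$ with $q < d$, both of which vanish.
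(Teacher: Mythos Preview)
Your argument for~\eqref{A-1} is essentially the paper's own: both apply graded local duality first over $R$ and then over $S=R\otimes_K R$, matching the two via the observation that the local cohomology of $\ModDif{n}{R}{K}$ at its maximal ideal is the same whether computed over $R$ or over $S$. The paper phrases the result with $\omega_R$ and $\omega_{\ModDif{}{R}{K}}$ in place of $R$ and $S$, which absorbs the $a$-invariant shifts you flag.

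For~\eqref{A-2} you take a genuinely different route, and it works. The paper does not feed~\eqref{A-1} into a spectral sequence; instead it quotes a Horrocks-type duality of Dao and Monta\~no asserting that for $M$ finitely generated and locally free on the punctured spectrum of a Cohen--Macaulay ring of dimension $\geq 3$, and $1\leq i\leq d-2$, one has $\Ext^i_R(M,\omega_R)\cong H^{i+1}_{R_+}(\Hom_R(M,\omega_R))$; it applies this with $M=\ModDif{n}{R}{K}$ and passes to the direct limit. Your spectral-sequence collapse is a correct self-contained substitute that avoids the external citation and makes the range $1\leq i\leq d-2$ transparent (injectivity of $d_{i+1}$ uses $i\neq d$, surjectivity uses $i\neq d-1$). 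The trade-off is that the paper's argument works verbatim for Cohen--Macaulay $R$ with $\omega_R$ in place of $R$ and already gives the finite-level isomorphisms $\Ext^i_R(\ModDif{n}{R}{K},\omega_R)\cong H^{i+1}_{R_+}(\Diff{n}{R}{K}(\omega_R))$ before taking limits, whereas your argument is tied to~\eqref{A-1} (hence, as stated, to the Gorenstein hypothesis) and only yields the statement in the limit. Replacing $S$ by $\omega_{\ModDif{}{R}{K}}$ throughout your spectral sequence would extend your method to the Cohen--Macaulay case as well.
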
\noindent
In particular, when $i=0$, Theorem~\ref{THM-A}(\ref{A-1}) yields the isomorphism 
\[D_{R|K} \cong H^{d}_{\Delta}(R\otimes_K R),\]
 which, for smooth varieties, is known as the Grothendieck--Sato formula. This description of $D_{R|K}$ was previously obtained in \cite{BZN} for Cohen-Macaulay algebras over a field satisfying a certain cohomological assumption; our characterization does not require this cohomological hypothesis.

In the case $i=1$, an immediate consequence of Theorem~\ref{THM-A}(\ref{A-1}) is that differential operators behave well under reduction to positive characteristic if and only if there is no $p$-torsion in a particular local cohomology module of $R\otimes_K R$. The study of $p$-torsion in local cohomology is of independent interest for its connections to Huneke's and Lyubeznik's questions on finiteness of associated primes of local cohomology \cite{Singh-LC,Huneke?,Lyubeznik?}. We use this connection to obtain the following positive answer to \cite[Question~5.1.2]{SmithVDB} under a stronger hyopthesis:
\begin{theoremx}[see Theorem~\ref{splitting-reduction}] Let $R$ be a Gorenstein graded $\ZZ$-algebra. If $R$ is a direct summand of a polynomial ring as an $R$-module, then the map $D_{R|\ZZ} \otimes \ZZ /p\ZZ \rightarrow D_{R/pR | \FF_p}$ is an isomorphism for all but at most finitely many primes $p$. 
\end{theoremx}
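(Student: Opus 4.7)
The proof assembles three ingredients: the Smith--Van den Bergh identification of the cokernel of \eqref{eq:reduction}, the local cohomology description of the Smith--Van den Bergh functors from Theorem~\ref{THM-A}, and a finiteness-of-associated-primes theorem for local cohomology of polynomial rings over $\ZZ$.

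By Proposition~\ref{SVDB-reduction}, the cokernel of the natural map $D_{R|\ZZ}\otimes_{\ZZ}\FF_{p}\to D_{R/pR\,|\,\FF_{p}}$ coincides with the $p$-torsion submodule of $R^{1}D_{R|\ZZ}(R,R)$. Using the $\ZZ$-coefficient version of Theorem~\ref{THM-A}(\ref{A-1}), this module is identified with $H^{d+1}_{\Delta}(R\otimes_{\ZZ}R)$, where $d=\dim R$ and $\Delta\subset R\otimes_{\ZZ}R$ is the kernel of the multiplication map. The theorem therefore reduces to showing that $H^{d+1}_{\Delta}(R\otimes_{\ZZ}R)$ has trivial $p$-torsion for all but finitely many primes $p$.

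Let $P=\ZZ[x_{1},\dots,x_{n}]$ be a polynomial ring in which $R$ is an $R$-module direct summand, with splitting $\pi\colon P\onto R$. Tensoring produces an $(R\otimes_{\ZZ}R)$-linear splitting $\pi\otimes\pi\colon P\otimes_{\ZZ}P\onto R\otimes_{\ZZ}R$. Since local cohomology at $\Delta$ may be computed on any $P\otimes_{\ZZ}P$-module as local cohomology at the extended ideal $\Delta\cdot(P\otimes_{\ZZ}P)$, applying $H^{d+1}_{\Delta}(-)$ exhibits $H^{d+1}_{\Delta}(R\otimes_{\ZZ}R)$ as a $\ZZ$-module direct summand of $H^{d+1}_{\Delta\cdot(P\otimes_{\ZZ}P)}(P\otimes_{\ZZ}P)$. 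In particular, any prime $p$ for which the former has nontrivial $p$-torsion produces $p$-torsion in the latter.

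Now $P\otimes_{\ZZ}P\cong\ZZ[x_{1},\dots,x_{n},y_{1},\dots,y_{n}]$ is a polynomial ring over $\ZZ$, so by the theorem of Bhatt--Blickle--Lyubeznik--Singh--Zhang on finiteness of associated primes of local cohomology modules of smooth $\ZZ$-algebras, $\Ass_{P\otimes_{\ZZ}P}H^{d+1}_{\Delta\cdot(P\otimes_{\ZZ}P)}(P\otimes_{\ZZ}P)$ is finite. A $P\otimes_{\ZZ}P$-module has nontrivial $p$-torsion exactly when $p$ lies in some associated prime, and the contraction of each associated prime to $\ZZ$ is either $(0)$ or $(q)$ for a single prime $q$, so only finitely many primes $p$ of $\ZZ$ can contribute. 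Together with the previous step, this completes the proof. The main obstacle is packaging the argument on the polynomial ring: the extended ideal $\Delta\cdot(P\otimes_{\ZZ}P)$ is generically not the diagonal ideal of $P$, and one must check both that the summand argument survives passage to local cohomology at this specific ideal and that the cited finiteness theorem applies here; a secondary technical point is establishing the $\ZZ$-coefficient analog of Theorem~\ref{THM-A}(\ref{A-1}) used in the reduction, since the statement in the excerpt is over a field.
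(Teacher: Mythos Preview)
Your proposal is correct and follows essentially the same route as the paper: pass to local cohomology of $R\otimes_{\ZZ}R$ along the diagonal, embed it as a direct summand of the corresponding local cohomology of the polynomial ring $S\otimes_{\ZZ}S$, and invoke the Bhatt--Blickle--Lyubeznik--Singh--Zhang finiteness theorem to rule out $p$-torsion for all but finitely many primes.

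Two small points are worth noting. First, the paper does not establish a $\ZZ$-coefficient version of Theorem~\ref{THM-A}(\ref{A-1}); instead it proves the equivalence you need (Proposition~\ref{prop-reduction}) by base-changing to the complete local ring $\widehat{\ZZ_{(p)}}$, where Theorem~\ref{functors-field} applies as stated, and then reading off the cokernel from the long exact sequence in local cohomology associated to $0\to \ModDif{}{R}{A}\xrightarrow{p}\ModDif{}{R}{A}\to \ModDif{}{\overline{R}}{\overline{A}}\to 0$. This bypasses the technical point you flagged. Second, your indexing is off by one: with $d=\dim R$ (Krull dimension, so including the arithmetic direction), the relevant module is $H^{d}_{\Delta}(R\otimes_{\ZZ}R)$, not $H^{d+1}_{\Delta}$; equivalently, the shift in Theorem~\ref{functors-field} is by $\mathrm{height}(R_+)$, which over a Dedekind base is $\dim R - 1$.
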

\noindent In particular, this applies to Gorenstein toric rings, as well as invariant rings of finite groups (after possibly replacing $\ZZ$ by $\ZZ[1/n]$ for some $n$). Additionally, we apply our description to give an easy conceptual example of a local cohomology module with $p$-torsion for all primes $p$; see Example~\ref{assoc-primes}. We also show that there is a precise obstruction to the property that a $K$-algebra $R$ is simple as a left $D_{R|K}$-module that lives in $R^1 D_{R|K}$.

We also use Theorem~\ref{THM-A} to show that the vanishing or nonvanishing of $R^i D_{R|K}(R,R)$ for $i> 1$ has interesting connections to aspects of the singularity of $R$. If $R$ is an isolated Gorenstein singularity, the (non)vanishing of these modules characterizes the depth of $D_{R|K}$ as an $R$-module; see Theorem~\ref{depth-cor}. Nonvanishing of these modules yields a lower bound for the dimension of secant varieties; see Proposition~\ref{radrank}. As an application of these results, we provide a result on injective maps from affine toric varieties into affine spaces; see Example~\ref{example-simplicial}.

\section{Differential operators and their derived functors}

Throughout, $A$ and $R$ denote commutative Noetherian rings with $1$.

\subsection*{Local cohomology}

For an ideal $I$ in a ring $R$, the \emph{$I$-torsion functor} \[\Gamma_I:R-\mathrm{mod}\rightarrow R-\mathrm{mod}\] is defined as ${\Gamma_I(M)=\bigcup_{n\in \NN}  \mathrm{Ann}_{M} (I^n)}$. The \emph{local cohomology functors} with support in $I$ are the right derived functors of the $I$-torsion functor: $H^i_I(-):= R^i \Gamma_I (-)$. A useful equivalent description is $H^i_I(M):=\ilim \Ext^i_R(R/I^n,M)$. Additionally, if $I=(f_1,\dots,f_t)$ the $i$-th local cohomology can be computed as the $i$-th cohomology of a complex of the form
\[ 0 \to R \to \bigoplus_{1\leq i \leq n} R_{f_i} \to\bigoplus_{1 \leq i<j \leq n} R_{f_i f_j} \to \cdots \to R_{f_1 \cdots f_t} \to 0,\]
where the maps are the sums of the natural localization maps, up to sign.

We refer the reader to \cite[\S3]{BH} and \cite{24} for the equivalences above, and other basic properties of local cohomology. 

\subsection*{Differential operators}

Let $R$ be a ring, and $A$ be a subring. If $M$ and $N$ are two $R$-modules, the \emph{($A$-linear) differential operators from $M$ to $N$ of order at most $i$}, denoted $\Diff{i}{R}{A}(M,N)$, are defined inductively as follows:
\begin{itemize}
	\item $\Diff{0}{R}{A}(M,N)=\hom{R}{M}{N}$,
	\item $\Diff{i}{R}{A}(M,N)=\{ \delta \in \hom{A}{M}{N} \ | \  \forall f\in R, \ \delta \circ f_M - f_N \circ \delta \in \Diff{i-1}{R}{A}(M,N) \}$,
\end{itemize}
where $f_M$ and $f_N$ denote multiplication by $f$ on $M$ or $N$ respectively. The \emph{differential operators from $M$ to $N$}   are 
\[ \Diff{}{R}{A}(M,N)=\bigcup\limits_{i=0}^{\infty} \Diff{i}{R}{A}(M,N).\]
Throughout we denote $\Diff{}{R}{A}(R,M)$ by $\Diff{}{R}{A}(M)$ and $\Diff{}{R}{A}(R,R)$ by $\Diff{}{R}{A}$, and similarly for differential operators up to a certain order.

Let $L$, $M$, and $N$ be three $R$ modules. It is easily verified that if $\alpha \in \Diff{i}{R}{A}(L,M)$ and $\beta \in \Diff{j}{R}{A}(M,N)$, then $\beta \circ \alpha \in  \Diff{i+j}{R}{A}(L,N)$. Consequently, $\Diff{}{R}{A}$ is a ring under composition, and $\Diff{}{R}{A}(M)$ is a right $\Diff{}{R}{A}$-module for any $R$-module $M$. Of course, $R$ is a left $\Diff{}{R}{A}$-module as well. We say that $R$ is \emph{$\Diff{}{R}{A}$-simple} if $R$ is a simple left $\Diff{}{R}{A}$-module.

\subsection*{Modules of principal parts}

We define the \textit{enveloping algebra} of $R$ over $A$ to be
\[\ModDif{}{R}{A}:=R\otimes_A R,\] and the
\textit{module of $n$-principal parts} of $R$ over $A$ to be
 \[\ModDif{n}{R}{A}:=\ModDif{}{R}{A}/\Delta^{n+1}_{R|A},\] 
where $\Delta_{R|A}$ is the kernel of the multiplication map $\ModDif{}{R}{A}\stackrel{\mu}{\longrightarrow} R$. We also set
\[\ModDif{}{R}{A}(M):=R\otimes_A M, \quad \text{and} \quad \ModDif{n}{R}{A}(M):=\ModDif{}{R}{A}(M) \otimes_{\ModDif{}{R}{A}} \ModDif{n}{R}{A}. \]
By \cite[16.7.4]{EGAIV4}, if $R$ is essentially of finite type over $A$ and $M$ is a finitely generated $R$-module, then $\ModDif{n}{R}{A}(M)$ is a finitely generated $R$-module for any $n$.

There is a natural isomorphism of Hom-tensor adjunction
\[ \Hom_R(\ModDif{}{R}{A}(M), N) \cong \Hom_A(M,N), \]
given by $\phi \mapsto \phi \circ d$, where $d(x)=1\otimes x$, and the homomorphisms on the left-hand-side are those linear with respect to the left $R$-action on $\ModDif{}{R}{A}(M)$.
Under this isomorphism, the $\ModDif{}{R}{A}$-module structure on $\Hom_R(\ModDif{}{R}{A}(M), N)$ given by its action on the source corresponds to the $\ModDif{}{R}{A}$-module structure on $\Hom_A(M,N)$ where the left factor acts on $N$ and the right factor acts on $M$. The adjunction map induces isomorphisms
\[ \Hom_R(\ModDif{n}{R}{A}(M), N) \cong \Diff{n}{R}{A}(M,N) \]
and
\[\ilim\hom{R}{\ModDif{n}{R}{A}(M)}{N} \cong \Diff{}{R}{A}(M,N).\]
These are isomorphisms of $\ModDif{}{R}{A}$-modules, where the structures are as described above. 

\begin{remark} We will often wish to consider $\ModDif{}{R}{A}$-modules as $R$-modules; in general this can be done in different ways.
	For a $\ModDif{}{R}{A}$-module $\mathcal{M}$, we will write $\resa \mathcal{M}$, respectively $\resb \mathcal{M}$, for the $R$-module structure on $\mathcal{M}$ given by restriction of scalars to the left, respectively right, $R$-factor of $\ModDif{}{R}{A}$. In particular, $\resa\Diff{}{R}{A}(M,N)$ is the structure induced by the action on $N$, and $\resb\Diff{}{R}{A}(M,N)$ is the structure induced by the action on $M$.
\end{remark}

\subsection*{The Smith--Van den Bergh functors}

The functor $M\mapsto \Diff{}{R}{A}(M)$ is a left-exact functor from the category of $R$-modules to $\ModDif{}{R}{A}$-modules. Following \cite[\S2.2]{SmithVDB}, we denote its right derived functors by $\df{i}{R}{A}(M)$. There is a natural identification \[{\df{i}{R}{A}(M)\cong\ilim\Ext^i_R(\ModDif{n}{R}{A},M)}.\]
Under mild hypotheses, there is an alternative description.

\begin{proposition}[Smith--Van den Bergh {\cite[Proposition~2.2.2]{SmithVDB}}]\label{SVDB-description} Suppose that $R$ is projective over $A$ and $\ModDif{}{R}{A}$ is Noetherian. Then $\df{i}{R}{A}({M})\cong H^i_{\Delta_{R|A}}(\hom{A}{R}{M})$ as $\ModDif{}{R}{A}$-modules. Consequently, if $\ModDif{}{R}{A}$ has Krull dimension $D<\infty$, then $\df{i}{R}{A}(-)\equiv 0$ for $i>D$.
\end{proposition}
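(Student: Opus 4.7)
The plan is to realize $\Diff{}{R}{A}(R, -)$ as a composition $\Gamma_{\Delta_{R|A}} \circ \Hom_A(R, -)$ of two left-exact functors whose derived functors are standard, and then derive this composition. First I would establish the $i=0$ case. Applying the natural adjunction recorded in the excerpt with source $M = R$ gives an identification of $\ModDif{}{R}{A}$-modules $\Hom_R(\ModDif{}{R}{A}, N) \cong \Hom_A(R, N)$, where on the right $\ModDif{}{R}{A} = R \otimes_A R$ acts via its two commuting $R$-factors. Restricting along the surjection $\ModDif{}{R}{A} \twoheadrightarrow \ModDif{n}{R}{A} = \ModDif{}{R}{A}/\Delta_{R|A}^{n+1}$ identifies $\Hom_R(\ModDif{n}{R}{A}, N)$ with the submodule $(0 :_{\Hom_A(R,N)} \Delta_{R|A}^{n+1})$, and passing to the direct limit in $n$ yields a natural, $\ModDif{}{R}{A}$-equivariant isomorphism
\[\Diff{}{R}{A}(N) \cong \Gamma_{\Delta_{R|A}}\bigl(\Hom_A(R,N)\bigr).\]

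Next I would promote this to higher derived functors. Under the hypothesis that $R$ is $A$-projective, the functor $\Hom_A(R, -) \cong \Hom_R(\ModDif{}{R}{A}, -)$ from $R$-modules to $\ModDif{}{R}{A}$-modules is exact (since $\ModDif{}{R}{A}$ is then projective as a left $R$-module) and preserves injectives (being right adjoint to the exact restriction-of-scalars along $R \to \ModDif{}{R}{A}$, $r \mapsto r \otimes 1$). Hence for any injective resolution $M \to I^\bullet$ of $R$-modules, the complex $\Hom_A(R, I^\bullet)$ is an injective resolution of $\Hom_A(R, M)$ as a $\ModDif{}{R}{A}$-module, and applying $\Gamma_{\Delta_{R|A}}$ and taking cohomology computes both the derived functors of the composition $\Gamma_{\Delta_{R|A}} \circ \Hom_A(R,-)$ and the local cohomology of $\Hom_A(R,M)$. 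This yields the claimed isomorphism
\[\df{i}{R}{A}(M) \cong H^i_{\Delta_{R|A}}\bigl(\Hom_A(R, M)\bigr)\]
for every $i \geq 0$ as $\ModDif{}{R}{A}$-modules, with the Noetherian hypothesis on $\ModDif{}{R}{A}$ ensuring that local cohomology there is well-behaved.

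For the vanishing consequence I would invoke the standard fact that over a Noetherian ring $S$ of finite Krull dimension $D$, the functors $H^i_I(-)$ vanish for $i > D$ on every $S$-module; applied to $S = \ModDif{}{R}{A}$ and $I = \Delta_{R|A}$, this gives $\df{i}{R}{A}(-) \equiv 0$ for $i > D$. The main obstacle I anticipate is bookkeeping: one must verify that the $i=0$ isomorphism is $\ModDif{}{R}{A}$-equivariant and natural in $M$, so that the derived-functor comparison takes place in $\ModDif{}{R}{A}$-modules rather than merely in $R$-modules; this reduces to tracing how the two commuting $R$-actions on $\Hom_A(R,M)$ behave under the adjunction.
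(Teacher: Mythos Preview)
The paper does not give its own proof of this proposition: it is simply attributed to Smith and Van den Bergh \cite[Proposition~2.2.2]{SmithVDB} and stated without argument. Your proposal is a correct and complete proof, and is in fact the standard argument (essentially the one in the cited reference): identify $\Diff{}{R}{A}(-)\cong \Gamma_{\Delta_{R|A}}\circ \Hom_A(R,-)$, observe that $\Hom_A(R,-)\cong \Hom_R(\ModDif{}{R}{A},-)$ is exact (since $R$ is $A$-projective) and preserves injectives (as right adjoint to exact restriction of scalars), and conclude that deriving the composite yields local cohomology of $\Hom_A(R,M)$. The Noetherian hypothesis is indeed what makes $\Gamma_{\Delta_{R|A}}$ and its derived functors coincide with the $\varinjlim \Ext$ description and what makes Grothendieck's vanishing theorem available for the final claim; you might say this explicitly rather than leaving it as ``well-behaved.''
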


We note the following consequence.

\begin{lemma}\label{dcd} Suppose that $R$ is projective over $A$ and $\ModDif{}{R}{A}$ is Noetherian and finite-dimensional. Let $s\in \NN$.
	If $\df{i}{R}{A}(R)=0$ for all $i>s$, then $\df{i}{R}{A}(-)\equiv 0$ for all $i>s$. In particular, if $\df{i}{R}{A}(R)=0$ for all $i>0$, then the functor $D_{R|A}(-): R\mathrm{-mod} \rightarrow \ModDif{}{R}{A}\mathrm{-mod}$ is exact.
\end{lemma}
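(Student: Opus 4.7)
The plan is to combine direct-sum compatibility of the derived functor with a standard dimension-shifting argument, using Proposition~\ref{SVDB-description} to terminate. First I would show that $\df{i}{R}{A}(F) = 0$ for every free $R$-module $F$ and every $i > s$. From the description $\df{i}{R}{A}(M) \cong \ilim_n \Ext^i_R(\ModDif{n}{R}{A}, M)$, and using that each $\ModDif{n}{R}{A}$ is a finitely generated $R$-module (hence $\Ext^i_R(\ModDif{n}{R}{A},-)$ commutes with arbitrary direct sums) together with the fact that direct limits commute with direct sums, one deduces that $\df{i}{R}{A}$ itself commutes with arbitrary direct sums. Hence $\df{i}{R}{A}\bigl(\bigoplus_\alpha R\bigr) \cong \bigoplus_\alpha \df{i}{R}{A}(R) = 0$ for $i > s$.

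For an arbitrary $R$-module $M$, I would then choose a free resolution $\cdots \to F_1 \to F_0 \to M \to 0$ and set $K_j := \ker(F_{j-1} \to F_{j-2})$ for $j \geq 0$, with $K_{-1} := M$. Each short exact sequence $0 \to K_j \to F_j \to K_{j-1} \to 0$ produces a long exact sequence in $\df{*}{R}{A}$; since $\df{i+j}{R}{A}(F_j) = \df{i+j+1}{R}{A}(F_j) = 0$ for $i > s$ and $j \geq 0$ by the previous step, this yields $\df{i+j}{R}{A}(K_{j-1}) \cong \df{i+j+1}{R}{A}(K_j)$. Iterating,
\[ \df{i}{R}{A}(M) \cong \df{i+k+1}{R}{A}(K_k) \quad \text{for every } k \geq 0. \]
Choosing $k$ so that $i + k + 1 > D := \dim \ModDif{}{R}{A}$, the right-hand side vanishes by Proposition~\ref{SVDB-description}, so $\df{i}{R}{A}(M) = 0$.

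For the ``in particular'' statement: assuming $\df{i}{R}{A}(-) \equiv 0$ for all $i > 0$, the long exact sequence associated to any short exact sequence $0 \to M' \to M \to M'' \to 0$ of $R$-modules collapses to a short exact sequence of $D_{R|A}(-)$'s, giving exactness. The main point requiring care is the direct-sum compatibility in step one: one must verify that each $\ModDif{n}{R}{A}$ is finitely generated as an $R$-module, so that $\Ext^i_R(\ModDif{n}{R}{A}, -)$ commutes with arbitrary direct sums. This is built into the essentially-of-finite-type conventions governing Proposition~\ref{SVDB-description}; once it is in hand, everything reduces to routine $\delta$-functor manipulation.
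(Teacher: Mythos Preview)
Your proof is correct and follows essentially the same strategy as the paper: vanishing on free modules via direct-sum compatibility, then dimension shifting along syzygies, terminated by the finite cohomological dimension bound from Proposition~\ref{SVDB-description}. There is a small indexing slip in your definition of $K_j$ (as written, $K_0$ would be $\ker(F_{-1}\to F_{-2})$), but the short exact sequences you actually use are the right ones, so this is cosmetic.
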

\begin{proof} Since the functors $\df{i}{R}{A}$ commute with direct sums,  $\df{i}{R}{A}(F)=0$ for any free module $F$. Given an arbitrary module $M$, take a short exact sequence \[0 \rightarrow \syz(M) \rightarrow F \rightarrow M \rightarrow 0.\] The long exact sequence in the functors $\df{\bullet}{R}{A}$ gives isomorphisms 
	\[\df{i}{R}{A}(M)\cong \df{i+1}{R}{A}(\syz(M)).\]
	Applying this repeatedly gives isomorphisms \[\df{i}{R}{A}(M)\cong \df{i+t}{R}{A}(\syz^t(M)).\]
	But, by Proposition~\ref{SVDB-description}, for $i+t>\mathrm{dim}(\ModDif{}{R}{A})$, we have $\df{i+t}{R}{A}(-)\equiv 0$, so ${\df{i}{R}{A}(-)\equiv 0}$.
	
	For the final statement, we note that the left-exact functor $D_{R|A}(-)$ is exact if and only if all of its right-derived functors vanish.
\end{proof}

\begin{remark}\label{CD}
	The previous lemma is the analogue of a well-known property of local cohomology: for any ideal $I$ in a ring $R$, $H^{i}_I(R)=0$ for $i>s$ implies $H^i_I(-)\equiv 0$ for $i>s$ \cite[Theorem~9.2]{24}.
\end{remark}

We will refer to the functors $\df{i}{R}{A}(-)$ as the \emph{Smith--Van den Bergh functors}.

\section{Duality and $D$-simplicity}

Recall that if $(R,\m,K)$ is a local ring, the \emph{Matlis duality} functor from $R$-mod to $R$-mod is given by $(-)\matl{}=\Hom_R(-,E_R(K))$, where $E_R(K)$ is the injective hull of the residue field. Similarly, if $R$ is $\NN$-graded, with $R_0$ a field $K$, the \emph{graded Matlis duality} functor is given by $(-)\matl{}=\gHom_K(-,K)$, which agrees with the usual Matlis duality functor on the category of graded $R$-modules; see \cite[Proposition~3.6.16]{BH}. Here, we use the notation $\gHom$ to denote the module of graded homomorphisms. We note that if $M$ is a finitely generated module and $N$ is an arbitrary graded module, $\gHom(M,N)$ agrees with $\Hom(M,N)$ after forgetting the grading, and likewise for the derived functors $^*\Ext(M,N)$, see \cite[Section~1.5]{BH}.

If $R$ is a local ring with a coefficient field, i.e., a subfield isomorphic to $K$ (which we will identify with $K$), we have the following useful alternative description: let $(-)\matl{K}=\ilim \Hom_K(M/\m^n M, K)$. Then the functor $(-)\matl{K}$ agrees with $\matl{}$ on the category of finitely generated $R$-modules \cite[Remark~3.2]{SmithBase}. Similarly, if $R$ is $\NN$-graded, with $R_0$ a field $K$, we set $(-)\matl{K}=\gHom_K(-,K)$, and in this setting the functor $(-)\matl{K}$ agrees with $\matl{}$ on the category of finitely generated graded $R$-modules.

We will use a version of a result of Switala \cite{Swi} in the complete local case and Switala--Zhang \cite{SwiZha} in the graded case that asserts that the Matlis dual of a left $\Diff{}{R}{K}$-module is a right $\Diff{}{R}{K}$-module, and vice versa. We provide here an alternative interpretation of this result, with an auxiliary observation that we employ in the sequel.

Given an $R$-module $M$, let $\fg(M)$ denote the directed system (with respect to containment) of finitely generated submodules of $M$. The \emph{compactly supported differential operators} from $M$ to $N$ are 
\[ \WDiff{R}{A}(M,N):= \plim\limits_{M_\lambda \in \fg(M)} \Diff{}{R}{A}(M_\lambda,N). \]
If $M$ is finitely generated, then $\WDiff{R}{A}(M,N)=\Diff{}{R}{A}(M,N)$ for any module $N$.

The first part of the following proposition is well-known to experts, but we include it here for completeness.

\begin{proposition}\label{Switala-duality}
	Let $(R,\m,K)$ be a graded or local ring essentially of finite type over a coefficient field $K$.
	\begin{enumerate}
		\item If $M$ is a finitely generated $R$-module, then $\resb\Diff{}{R}{K}(M,K)\cong M\matl{}$ as $R$-modules.
		\item If $M$ is an arbitrary $R$-module, then $\resb\WDiff{R}{K}(M,K)\cong M\matl{}$ as $R$-modules.
		\item (Switala, Switala--Zhang) If $M$ is an $R$-module with a compatible left (right) $\Diff{}{R}{K}$-module structure, then the $R$-module $M\matl{}$ admits a natural right (respectively, left) $\Diff{}{R}{K}$-module structure.\qedhere
	\end{enumerate}
\end{proposition}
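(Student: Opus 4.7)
The plan is to prove (1) via a direct unwinding of the order filtration on $\Diff{}{R}{K}(M,K)$ that matches it with the Matlis dual, deduce (2) by passing to an inverse limit over finitely generated submodules, and finally derive (3) by transporting a natural composition action on $\Diff{}{R}{K}(M,K)$ across these isomorphisms. The main substantive step is the characterization in (1); once in place, (2) and (3) follow by formal bookkeeping.

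For (1), I would start with a $K$-linear map $\phi \colon M \to K$ and use the coefficient field to write each $f \in R$ as $f = \lambda + g$ with $\lambda \in K$ and $g \in \m$. Since the scalar $\lambda$ contributes identical terms on both sides, the commutator $[\phi, f] = \phi \circ f_M - f_K \circ \phi$ simplifies to $\phi \circ g_M$. A straightforward induction then shows that for $f_0, \dots, f_n \in \m$ the iterated commutator equals $\phi \circ (f_0 f_1 \cdots f_n)_M$, so $\phi \in \Diff{n}{R}{K}(M,K)$ if and only if $\phi(\m^{n+1} M) = 0$. Taking the union over $n$ identifies $\Diff{}{R}{K}(M,K)$ with the $K$-linear maps that factor through some $M/\m^n M$, which is precisely $M\matl{K}$; for finitely generated $M$ this equals $M\matl{}$ by the alternative description recalled from \cite[Remark~3.2]{SmithBase}. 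The $\resb$-action sends $\phi$ to $m \mapsto \phi(rm)$, matching the standard $R$-action on $M\matl{}$.

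For (2), I would apply (1) pointwise to the defining system, obtaining
\[ \WDiff{R}{K}(M,K) = \plim_{M_\lambda \in \fg(M)} \Diff{}{R}{K}(M_\lambda, K) \cong \plim_{M_\lambda} M_\lambda\matl{}. \]
Since $M$ is the filtered union of its finitely generated submodules and $(-)\matl{}$ converts direct limits to inverse limits, the right-hand side equals $M\matl{}$, and the $\resb$-structures match on the nose.

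For (3), suppose $M$ carries a compatible left $\Diff{}{R}{K}$-module structure, meaning the action extends the $R$-structure; then each $\delta \in \Diff{n}{R}{K}$ acts on $M$ as an operator $\delta_M \in \Diff{n}{R}{K}(M,M)$, by an induction on the order filtration using that $[\delta,r]\in\Diff{n-1}{R}{K}$ for $r\in R$. Via (1) or (2) it suffices to equip $\Diff{}{R}{K}(M,K)$ (respectively $\WDiff{R}{K}(M,K)$) with a right $\Diff{}{R}{K}$-action extending the $\resb$-structure, which I would define by $\phi \cdot \delta := \phi \circ \delta_M$. This lands in $\Diff{}{R}{K}(M,K)$ because composition of differential operators is a differential operator with orders adding; associativity follows from associativity of composition together with the ring-homomorphism property of the $\Diff{}{R}{K}$-action on $M$; and for $r \in R \subseteq \Diff{}{R}{K}$ the formula specializes to $(\phi \cdot r)(m) = \phi(rm)$, so this genuinely extends $\resb$. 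The right-to-left case is dual, via $(\delta \cdot \phi)(m) := \phi(m \cdot \delta)$. The only delicate bookkeeping is to confirm that the $\resb$-action on $\Diff{}{R}{K}(M,K)$ matches the $R$-action on $M\matl{}$, which is immediate from $(\phi \cdot r)(m) = \phi(rm)$ on both sides.
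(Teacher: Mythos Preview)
Your argument for (1) is correct and takes a more elementary route than the paper's: rather than passing through the principal-parts description $\Diff{}{R}{K}(M,K)\cong\ilim\Hom_R(\ModDif{n}{R}{K}(M),K)$ and the ideal identity $(\m\otimes 1+\Delta_{R|K}^{n+1})=(\m\otimes 1+1\otimes\m^{n+1})$ in $R\otimes_K R$, you compute the iterated commutators directly to obtain the characterization $\phi\in\Diff{n}{R}{K}(M,K)\Longleftrightarrow\phi(\m^{n+1}M)=0$. Both routes land on the same identification with $\ilim\Hom_K(M/\m^{n+1}M,K)$; yours avoids the bimodule machinery, while the paper's makes the $\ModDif{}{R}{K}$-linearity of the isomorphism more transparent. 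Part (2) is handled identically in both.

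For (3) there is a genuine gap in the non--finitely-generated case. When $M$ is not finitely generated you must work via (2), so $\phi$ lives in $\WDiff{R}{K}(M,K)=\plim_{M_\lambda\in\fg(M)}\Diff{}{R}{K}(M_\lambda,K)$. Concretely, $\phi$ is a $K$-linear map $M\to K$ whose restriction to each finitely generated $M_\lambda$ is a differential operator, but $\phi$ itself is not a differential operator on $M$ of any fixed finite order. Hence your justification ``composition of differential operators is a differential operator with orders adding'' does not apply to $\phi\circ\delta_M$, and you have not shown that $\phi\circ\delta_M$ lies in $\WDiff{R}{K}(M,K)$. What is missing is precisely the observation the paper supplies: any $\delta_M\in\Diff{n}{R}{K}(M,M)$ sends a finitely generated $M_\lambda$ into some finitely generated $M_\mu$, because $\delta_M|_{M_\lambda}$ factors as $d:M_\lambda\to\ModDif{n}{R}{K}(M_\lambda)$ followed by an $R$-linear map, and $\ModDif{n}{R}{K}(M_\lambda)$ is a finitely generated $R$-module. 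Once this is in hand, $(\phi\circ\delta_M)|_{M_\lambda}$ is the composite of $\delta_M|_{M_\lambda}\in\Diff{n}{R}{K}(M_\lambda,M_\mu)$ with $\phi|_{M_\mu}\in\Diff{}{R}{K}(M_\mu,K)$, hence a differential operator $M_\lambda\to K$, and the precomposition action on $\WDiff{R}{K}(M,K)$ is well defined.
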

\begin{proof}
	\begin{enumerate}
		\item We compute as $R \otimes_K R$-modules:
		\begin{align*}	\Diff{}{R}{K}(M,K)
		&\cong\ilim \hom{R}{\ModDif{n}{R}{K}(M)}{K} \\
		&\cong \ilim \mathrm{Hom}_{R}\Big(\frac{R\otimes_K M} {\Delta^{n+1}_{R|K}(R\otimes_K M)},K \Big)  \\
		&\cong \ilim \mathrm{Hom}_{R}\Big(\frac{R\otimes_K M} {(\m \otimes 1 + \Delta^{n+1}_{R|K})(R\otimes_K M)},K \Big) \\
		&\cong \ilim \hom{R}{K\otimes_K M/\m^{n+1}M}{K},
		\end{align*}
		where the third equality follows from the fact that any $R$-linear map from a module $N$ to $K$ factors uniquely through $N/\m N$, and the fourth from the equality 
		\[(\m \otimes 1 + \Delta^{n+1}_{R|K}) = (\m \otimes 1 + 1\otimes \m^{n+1})\] of ideals in $R\otimes_K R$. Here, we have kept the notation ${K\otimes_K M/\m^{n+1}M}$ to indicate the $R\otimes_K R$-module structure.
		Thus, 
		\begin{align*}\resb\Diff{}{R}{K}(M,K) &= \resb \ilim \hom{R}{K\otimes_K M/\m^{n+1}M}{K} \\ &= \ilim \hom{K}{M/\m^{n+1}M}{K}\cong M\matl{}.
		\end{align*}
		
		\item We compute
		\begin{align*} \resb\WDiff{R}{K}(M,K) 
		&= \plim\limits_{M_\lambda \in \fg(M)} {\resb\Diff{}{R}{K}(M_\lambda,K)} \\
		&\cong \plim\limits_{M_\lambda \in \fg(M)} M_\lambda\matl{} \\
		&\cong \Big(\ilim\limits_{M_\lambda \in \fg(M)} M_\lambda\Big)\matl{} \cong M\matl{}.
		\end{align*}
		\item First, we note that if $M$ is a left (respectively, right) $\Diff{}{R}{K}$-module, then the action of $\Diff{}{R}{K}$ is given by differential operators from $M$ to $M$; indeed, this is immediate from the definition. Now, every differential operator $\delta$ on $M$ takes finitely generated submodules to finitely generated submodules. Indeed, $\delta$ factors as $d:M\rightarrow \ModDif{n}{R}{K}(M)$ followed by an $R$-linear map $\phi: \ModDif{n}{R}{K}(M)\rightarrow M$; the restriction of such $\delta$ to a finitely generated $M_{\lambda}$ factors through an $R$-linear map $\phi':\ModDif{n}{R}{K}(M_\lambda)\rightarrow M$, and since $\ModDif{n}{R}{K}(M_\lambda)$ is finitely generated, the image is contained in a finitely generated submodule of $M$. Consequently, $\Diff{}{R}{K}(M,M)$ acts on $\WDiff{R}{K}(M,K)$ by precomposition. Hence, we obtain an action of ${(\Diff{}{R}{K})}^{\mathrm{op}}$ on $\WDiff{R}{K}(M,K)$.\qedhere
	\end{enumerate}
\end{proof}

We apply the previous proposition to show that \emph{$D_{R|K}$-simplicity}, the property that $R$ is a simple left $D_{R|K}$-module, admits a straightforward characterization in terms of the Smith--Van den Bergh functor $\df{1}{R}{K}(-)$.

\begin{proposition}\label{D-simple-prop}
	Let $K$ be a field, and $R$ be a Noetherian $\NN$-graded $K$-algebra, with $R_0=K$, and maxixal homogeneous ideal $R_+$. Let $\varpi:\Diff{}{R}{K}(R)\rightarrow \Diff{}{R}{K}(K)$ be the map obtained by composition with the $R$-linear surjection $\pi:R\rightarrow K$. Then $R$ is a simple left $D_{R|K}$-module if and only $\varpi$ is surjective. Equivalently, $R$ is $D_{R|K}$-simple if and only if the connecting homomorphism $\Diff{}{R}{K}(K)\rightarrow \df{1}{R}{K}(R_+)$ is the zero map.
\end{proposition}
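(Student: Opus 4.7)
The equivalence of the surjectivity of $\varpi$ with the vanishing of the connecting homomorphism $\Diff{}{R}{K}(K) \to \df{1}{R}{K}(R_+)$ is immediate from the long exact sequence of the derived functors $\df{\bullet}{R}{K}$ applied to $0 \to R_+ \to R \to K \to 0$. The substantive content is the equivalence of $\Diff{}{R}{K}$-simplicity of $R$ and surjectivity of $\varpi$, which I plan to address via graded Matlis duality.

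First, I would use Proposition~\ref{Switala-duality}(1) to identify $\Diff{}{R}{K}(R,K) \cong R\matl{}$ as right $\Diff{}{R}{K}$-modules, where the right action on the left-hand side is by precomposition and on the right-hand side is the one supplied by Proposition~\ref{Switala-duality}(3). Under this identification, $\varpi$ becomes a right $\Diff{}{R}{K}$-module homomorphism with $\varpi(1) = \pi$, so its image equals the cyclic right submodule $\pi \cdot \Diff{}{R}{K} \subset R\matl{}$. Hence $\varpi$ is surjective if and only if $\pi$ generates $R\matl{}$ as a right $\Diff{}{R}{K}$-module.

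The structural key is the following lemma: every nonzero $R$-submodule of $R\matl{}$ contains $\pi$. I would prove this by a direct degree argument. Since $R\matl{}$ is concentrated in non-positive degrees and $(R\matl{})_0 = K\pi$, any $0 \neq \phi \in R\matl{}$ has a unique lowest-degree nonzero homogeneous component $\phi_{-d}$; for $r \in R_d$, the remaining homogeneous contributions to $r \cdot \phi$ would lie in positive degree and hence vanish, leaving $r \cdot \phi = \phi_{-d}(r) \cdot \pi$. Since $\phi_{-d}$ is a nonzero element of the $K$-dual of $R_d$, there is some $r$ with $\phi_{-d}(r) \neq 0$, exhibiting a nonzero scalar multiple of $\pi$ in the submodule generated by $\phi$. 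Consequently, $\pi \cdot \Diff{}{R}{K}$ is the unique minimal nonzero $\Diff{}{R}{K}$-submodule of $R\matl{}$, and $\varpi$ is surjective precisely when $R\matl{}$ is simple as a right $\Diff{}{R}{K}$-module.

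Finally, I would invoke graded Matlis duality (via Proposition~\ref{Switala-duality}(3) together with Matlis reflexivity): the functor $(-)\matl{}$ is a contravariant exact equivalence interchanging left and right $\Diff{}{R}{K}$-module structures that pairs $\Diff{}{R}{K}$-submodules of $R$ with $\Diff{}{R}{K}$-submodules of $R\matl{}$, so $R$ is $\Diff{}{R}{K}$-simple iff $R\matl{}$ is, completing the equivalence. The main subtlety I foresee is that $\Diff{}{R}{K}$-simplicity concerns arbitrary submodules while graded Matlis duality most directly handles graded ones; I would address this by strengthening the Lemma to show every $R$-submodule of $R\matl{}$ is automatically graded, via an induction on the number of nonzero homogeneous components using iterated top-component extraction, so all relevant submodules reduce to the graded setting.
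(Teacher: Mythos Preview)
Your argument is essentially the paper's: identify $\resb\Diff{}{R}{K}(K)$ with $E_R(K)=R\matl{}$ via Proposition~\ref{Switala-duality}(1), recognize the image of $\varpi$ as the right $\Diff{}{R}{K}$-submodule generated by the socle element $\pi$, use essentiality of the socle to get that $\varpi$ is surjective iff $R\matl{}$ is right-$\Diff{}{R}{K}$-simple, and transfer simplicity between $R$ and $R\matl{}$ via Proposition~\ref{Switala-duality}(3) and Matlis duality. Your degree proof of the socle lemma spells out what the paper asserts in one line.

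The one step that would fail is your proposed handling of the graded/ungraded subtlety: it is \emph{not} true that every $R$-submodule of $R\matl{}$ is graded. For $R=K[x,y]$, with $\{\phi_{a,b}\}$ the basis of $R\matl{}$ dual to $\{x^ay^b\}$, the cyclic $R$-submodule generated by $\phi_{1,0}+\phi_{0,2}$ is the $K$-span of $\phi_{0,0}$, $\phi_{0,1}$, and $\phi_{1,0}+\phi_{0,2}$; it does not contain the homogeneous component $\phi_{1,0}$, so no ``top-component extraction'' by the $R$-action is available. The paper does not linger here, invoking only faithfulness of Matlis duality. If you want an explicit patch compatible with your outline, show instead that graded and ungraded $\Diff{}{R}{K}$-simplicity of $R$ agree: for $0\neq f=\sum_d f_d\in R$ with top nonzero component $f_D$, graded simplicity yields a homogeneous $\delta$ of degree $-D$ with $\delta(f_D)=1$, and then $\delta(f_d)\in R_{d-D}=0$ for $d<D$ since $R$ is $\NN$-graded, whence $\delta(f)=1$; the dual argument (using non-positivity of degrees) handles $R\matl{}$.
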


\begin{proof}
	By Proposition~\ref{Switala-duality}~(1), $\resb\Diff{}{R}{K}(K)$ is isomorphic to $E_R(K)$. Clearly, $\eta=\pi \circ d$ in $\Diff{}{R}{K}(K)$ generates the socle of $R\matl{}$. Now, $\varpi(\delta)=\eta \circ \delta$; that is, $\varpi$ agrees with the natural right $D_{R|K}$-action on $E_{R}(K)$ applied to a socle generator of $E_{R}(K)$.
	
	We claim that $E_{R}(K)$ is a simple right $D_{R|K}$-module if and only if $\varpi$ is surjective. Indeed, the image of $\varpi$ is the $D_{R|K}$-submodule of $E_{R}(K)$ generated by the socle, so if  $E_{R}(K)$ is simple, then $\varpi$ is surjective. On the other hand, any nonzero $D_{R|K}$-submodule of $E_{R}(K)$ contains the socle of $E_R(K)$ and hence must contain the image of $\varpi$. Thus, if $\varpi$ is surjective, $E_R(K)$ is simple.
	
	To conclude the proof, we note by Proposition~\ref{Switala-duality}~(3) and faithfulness of Matlis duality, $R$ has a nontrivial left $D_{R|K}$-submodule if and only if $E_{R}(K)$ admits a nontrivial right $D_{R|K}$-submodule. Thus, $E_R(K)$ is simple if and only if $R$ is as well.
\end{proof}

\section{Two descriptions of $\df{i}{R}{A}(\omega_R)$}

We give our first description of the Smith--Van den Bergh functors for graded algebras. We refer to \cite[\S 1.5~and~\S 3.6]{BH} for background on the graded local duality used below.

\begin{theorem}\label{functors-field}
	Let $A$ be a field or complete local ring, and $R$ be a Cohen-Macaulay Noetherian $\NN$-graded $A$-algebra, with $R_0=A$, and maximal homogeneous ideal $R_+$. Let $d=\mathrm{height}(R_+)$. Then $\df{i}{R}{A}(\omega_R)\cong H^{d+i}_{\Delta_{R|A}}(\omega_{\ModDif{}{R}{A}})$ as $\ModDif{}{R}{A}$-modules.
\end{theorem}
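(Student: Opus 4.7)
The plan is to combine Proposition~\ref{SVDB-description} with a change-of-rings argument that identifies each term of a direct system computing both sides.

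Both sides admit parallel direct-limit descriptions in terms of the modules of principal parts. The identification $\df{i}{R}{A}(M) \cong \ilim_n \Ext^i_R(\ModDif{n}{R}{A}, M)$ recalled in Section~2 (with $\ModDif{n}{R}{A}$ treated as a left $R$-module), specialized to $M = \omega_R$, yields
\[
\df{i}{R}{A}(\omega_R) \cong \ilim_n \Ext^i_R(\ModDif{n}{R}{A}, \omega_R),
\]
while by the Ext description of local cohomology,
\[
H^{d+i}_{\Delta_{R|A}}(\omega_{\ModDif{}{R}{A}}) \cong \ilim_n \Ext^{d+i}_{\ModDif{}{R}{A}}(\ModDif{n}{R}{A}, \omega_{\ModDif{}{R}{A}}).
\]

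The crux is to produce, for each $n$, a natural isomorphism
\[
\Ext^{d+i}_{\ModDif{}{R}{A}}(\ModDif{n}{R}{A}, \omega_{\ModDif{}{R}{A}}) \cong \Ext^i_R(\ModDif{n}{R}{A}, \omega_R).
\]
Under the hypotheses, $\ModDif{}{R}{A} = R \otimes_A R$ is Cohen--Macaulay of dimension $2d$ with canonical module $\omega_{\ModDif{}{R}{A}} \cong \omega_R \otimes_A \omega_R$, so local duality on $\ModDif{}{R}{A}$ gives $\Ext^q_{\ModDif{}{R}{A}}(R, \omega_{\ModDif{}{R}{A}}) \cong \omega_R$ for $q = d$ and vanishing otherwise. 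The change-of-rings spectral sequence
\[
\Ext^p_R(N, \Ext^q_{\ModDif{}{R}{A}}(R, \omega_{\ModDif{}{R}{A}})) \Rightarrow \Ext^{p+q}_{\ModDif{}{R}{A}}(N, \omega_{\ModDif{}{R}{A}})
\]
then collapses to $\Ext^{p+d}_{\ModDif{}{R}{A}}(N, \omega_{\ModDif{}{R}{A}}) \cong \Ext^p_R(N, \omega_R)$ for every $R$-module $N$.

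To extend this to $\ModDif{n}{R}{A}$, which is not annihilated by $\Delta_{R|A}$ for $n \geq 1$, I would induct on $n$ using the short exact sequence
\[
0 \to \Delta_{R|A}^n / \Delta_{R|A}^{n+1} \to \ModDif{n}{R}{A} \to \ModDif{n-1}{R}{A} \to 0,
\]
which is simultaneously exact as $\ModDif{}{R}{A}$-modules and as left $R$-modules. The quotient $\Delta_{R|A}^n / \Delta_{R|A}^{n+1}$ is an $R$-module, to which the previous paragraph applies directly. Combined with the inductive hypothesis for $\ModDif{n-1}{R}{A}$ and the naturality of the change-of-rings isomorphism, the five-lemma applied to the long exact sequences in $\Ext^\bullet_R(-, \omega_R)$ and $\Ext^{\bullet+d}_{\ModDif{}{R}{A}}(-, \omega_{\ModDif{}{R}{A}})$ propagates the isomorphism to $\ModDif{n}{R}{A}$. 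Taking the direct limit over $n$ yields the claim.

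The hard part will be verifying in detail that $\ModDif{}{R}{A}$ is Cohen--Macaulay of dimension $2d$ with canonical module $\omega_R \otimes_A \omega_R$ under the stated hypotheses on $A$, and ensuring that the change-of-rings isomorphism is sufficiently functorial for the five-lemma step to commute with the boundary maps of the iterated long exact sequences.
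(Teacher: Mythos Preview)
Your overall architecture---express both sides as $\ilim_n$ of Ext groups on $\ModDif{n}{R}{A}$ and then identify the terms---matches the paper exactly. The divergence is in how the termwise identification
\[
\Ext^i_R(\ModDif{n}{R}{A},\omega_R)\;\cong\;\Ext^{d+i}_{\ModDif{}{R}{A}}(\ModDif{n}{R}{A},\omega_{\ModDif{}{R}{A}})
\]
is established. The paper does this in one stroke by applying graded local duality twice: over $R$, the left side is Matlis dual to $H^{d-i}_{R_+}(\ModDif{n}{R}{A})$; over $P=\ModDif{}{R}{A}$, the right side is Matlis dual to $H^{d-i}_{P_+}(\ModDif{n}{R}{A})$; and these two local cohomology modules coincide because $\ModDif{n}{R}{A}$ is finitely generated over both $R$ and $P$, so both compute local cohomology at its own homogeneous maximal ideal. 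This yields the isomorphism functorially in $n$ with no induction and no spectral sequence.

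Your five-lemma induction, by contrast, has a genuine gap. The Grothendieck change-of-rings spectral sequence along $P\twoheadrightarrow R$ only applies to $P$-modules annihilated by $\Delta_{R|A}$, so it furnishes a natural isomorphism on $\Delta^n/\Delta^{n+1}$ and on $\ModDif{0}{R}{A}=R$, but says nothing about $\ModDif{n}{R}{A}$ for $n\geq 1$. To invoke the five-lemma on the long exact sequences coming from $0\to\Delta^n/\Delta^{n+1}\to\ModDif{n}{R}{A}\to\ModDif{n-1}{R}{A}\to 0$, you need a \emph{given} vertical map on every term, including the middle one, commuting with both connecting homomorphisms; the five-lemma then certifies that this given map is an isomorphism, it does not produce one. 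Your inductive hypothesis supplies a map on $\ModDif{n-1}{R}{A}$ and the spectral sequence supplies one on $\Delta^n/\Delta^{n+1}$, but nothing in your outline defines a compatible map on $\ModDif{n}{R}{A}$ itself. Any attempt to manufacture the middle map from the outer two runs into nonuniqueness. The cleanest way to supply the missing natural transformation on all $\Delta$-power-torsion $P$-modules at once is precisely the paper's local-duality argument---at which point the induction is superfluous.
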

\begin{proof}
	First, we note that $R$ and $P:=\ModDif{}{R}{A}$ are quotients of polynomial rings over $A$, so each has a canonical module. The hypotheses on $A$ imply that $R$ and $P$ are each *local *complete rings, in the terminology of \cite[\S 1.5]{BH}, with *maximal ideals $R_+$ and $P_+$, respectively. The *dimensions of $R$ and $P$ are $d$ and $2d$, respectively.

	Since each of the graded $R$-modules $\ModDif{n}{R}{A}$ is finitely generated, we may apply graded local duality over $R$ to obtain $\Ext^i_R(\ModDif{n}{R}{A},\omega_R)\cong (H^{d-i}_{R_+}(\ModDif{n}{R}{A}))\matl{}.$
	
	We note that $P$ is also a Cohen-Macaulay ring. We can then apply graded local duality over $P$, yielding isomorphisms $ \Ext^{d+i}_{P}(\ModDif{n}{R}{A},\omega_{P})\cong (H^{d-i}_{P_+}(\ModDif{n}{R}{A}))\matl{}$. Since $\ModDif{n}{R}{A}$ is finitely generated as an $R$-module and as a $P$-module, $H^{d-i}_{P_+}(\ModDif{n}{R}{A})$ and $H^{d-i}_{R_+}(\ModDif{n}{R}{A})$ both agree with the local cohomology of $\ModDif{n}{R}{A}$ with support in its own homogeneous maximal ideal, hence are naturally isomorphic.
	Thus, there are isomorphisms $\Ext^i_R(\ModDif{n}{R}{A},\omega_R)\cong \Ext^{d+i}_{P}(\ModDif{n}{R}{A},\omega_{P})$ that extend to isomorphisms of directed systems as $n$ varies by functoriality of local duality. Each of these isomorphisms is $P$-linear. The statement then follows from passing to direct limits of the directed systems.
\end{proof}

This description yields an interesting symmetry in the case when $R$ is Gorenstein.

\begin{corollary}\label{gorollary}
	Let $R$ and $A$ be as above, and assume that $R$ is Gorenstein. Then there is an isomorphism $\resa \df{i}{R}{A}(R) \cong {\resb\df{i}{R}{A}(R)}$. In particular, the source and target $R$-module structures on the ring of differential operators $\Diff{}{R}{A}$ agree.
\end{corollary}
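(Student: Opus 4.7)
The plan is to exploit the flip symmetry of the enveloping algebra $P := \ModDif{}{R}{A} = R \otimes_A R$. Since $R$ is Gorenstein, $\omega_R \cong R$ (up to a grading shift $R(a)$ in the graded setting; this shift will appear symmetrically on both sides below and can be absorbed at the end). Plugging this identification into Theorem~\ref{functors-field} yields an isomorphism
\[ \df{i}{R}{A}(R) \cong H := H^{d+i}_{\Delta_{R|A}}(\omega_P) \]
of $P$-modules (up to the grading twist). The corollary then reduces to producing an $R$-module isomorphism $\resa H \cong \resb H$.

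The key observation is that $P$ admits the flip automorphism $\sigma: P \to P$ given by $\sigma(r \otimes s) = s \otimes r$. Since the multiplication map $\mu: P \to R$ satisfies $\mu \circ \sigma = \mu$, the ideal $\Delta_{R|A} = \ker \mu$ is stabilized by $\sigma$. For any $P$-module $M$, let $\sigma_* M$ denote the $P$-module equal to $M$ as an abelian group but with twisted action $p \cdot m := \sigma(p) m$. By design, $\resa \sigma_* M = \resb M$ and $\resb \sigma_* M = \resa M$. Functoriality of local cohomology along the automorphism $\sigma$ gives a natural isomorphism
\[ \sigma_* H^{d+i}_{\Delta_{R|A}}(\omega_P) \cong H^{d+i}_{\sigma(\Delta_{R|A})}(\sigma_* \omega_P) = H^{d+i}_{\Delta_{R|A}}(\sigma_* \omega_P). \]
Uniqueness of the canonical module of $P$ applied to the ring automorphism $\sigma$ gives $\sigma_* \omega_P \cong \omega_P$ as $P$-modules, and combining yields $H \cong \sigma_* H$ as $P$-modules. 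Restricting the underlying $R$-structure via the two factors then gives $\resa H \cong \resa \sigma_* H = \resb H$, as required. The final ``in particular'' statement is the case $i = 0$, where $\df{0}{R}{A}(R) = \Diff{}{R}{A}(R,R) = \Diff{}{R}{A}$.

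The main technical point is the step $\sigma_* \omega_P \cong \omega_P$: this requires that $P$ admits a canonical module that is unique up to $P$-module isomorphism. In the graded-over-a-field case, $P$ is a Noetherian $\NN$-graded $K$-algebra that is a quotient of a polynomial ring, so this is the standard uniqueness of $\omega_P$ from graded local duality; in the complete local case, the analogous statement for $\ast$local $\ast$complete rings, already used in the proof of Theorem~\ref{functors-field}, supplies what is needed. Tracking of the grading shift from $\omega_R \cong R(a)$ introduces a symmetric shift $(-a)$ on $H$ that commutes with restriction along either factor of $P$, so it does not interfere with the $R$-module isomorphism extracted above.
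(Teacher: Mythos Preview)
Your argument is correct and follows the same core idea as the paper: pass to the description $\df{i}{R}{A}(R)\cong H^{d+i}_{\Delta_{R|A}}(\omega_P)$ from Theorem~\ref{functors-field} and exploit the flip symmetry of $P=R\otimes_A R$. The only difference is that the paper first notes that $P$ itself is Gorenstein, so $\omega_P\cong P$, and then appeals to the manifest symmetry of $H^{d+i}_{\Delta}(P)$ under swapping the tensor factors; you instead leave $\omega_P$ as is and invoke uniqueness of the canonical module to get $\sigma_*\omega_P\cong\omega_P$, which is an equally valid (and slightly more flexible) way to reach the same conclusion.
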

\begin{proof}
When $R$ is Gorenstein, $\ModDif{}{R}{A}$ is as well, so $\omega_{\ModDif{}{R}{A}}\cong \ModDif{}{R}{A}$. Then, the module ${\df{i}{R}{A}(R)\cong H^{d+i}_{\Delta_{R|A}}(R\otimes_A R)}$ admits the same $R$-module structure by restriction of scalars to either $R$-factor.
\end{proof}

\begin{example}
	The conclusion of the previous corollary may fail if $R$ is not Gorenstein. Let $K$ be a field, and $R=K[x,y]/(x,y)^2$. Then $D_{R|K}=\Hom_K(R,R)$, since $\Delta_{R|K}^3=0$ in $\ModDif{}{R}{K}$. Thus, $\resa{D_{R|K}}\cong \Hom_K(K,R)^{\oplus 3}\cong R^{\oplus 3}$, whereas $\resb{D_{R|K}}\cong \Hom_K(R,K)^{\oplus 3}\cong (R\matl{})^{\oplus 3}$. Since $R$ is not Gorenstein, these are not isomorphic. The same argument works more generally for any graded artinian $K$-algebra that is not Gorenstein.
\end{example}

\begin{remark}\label{BZNrmk} Our description of $\df{i}{R}{A}(R)$ also sheds some light on the hypotheses of \cite{BZN}. Let $K$ be a field. In {ibid.}, a Cohen-Macaulay $K$-algebra of Krull dimension $d$ determines a \emph{good CM variety} if ${H^{d+1}_{\Delta_{R|K}}(R\otimes_K \omega_R)}=0$, and a \emph{very good CM variety} if $H^{i}_{\Delta_{R|K}}(M)=0$ for all $\ModDif{}{R}{K}$-modules $M$ and all $i>d$. If $R$ is Gorenstein and graded, it follows from Theorem~\ref{functors-field} and Lemma~\ref{dcd} that $R$ is a good CM variety if and only if $\df{1}{R}{K}(R)=0$, and $R$ is a very good CM variety if and only if $\Diff{}{R}{K}(-)$ is an exact functor. We note also that in {ibid.}, the Grothendieck--Sato formula is obtained for good CM varieties without the Gorenstein assumption.
\end{remark}

Using a version of a duality of Horrocks~\cite{Horrocks} due to Dao and Monta\~no \cite{DM}, we can give a different description of the derived functors of the ring of differential operators for isolated singularities.

\begin{theorem}\label{second-iso} Let $K$ be a field, and $R$ be a graded $K$-algebra with an isolated singularity. Suppose that $d=\dim(R)\geqslant 3$. Then,  $\df{i}{R}{K}(\omega_R)\cong H^{i+1}_{R_+}(\Diff{}{R}{K}(\omega_R))$ as $\ModDif{}{R}{K}$-modules for $1\leqslant i \leqslant d-2$.
\end{theorem}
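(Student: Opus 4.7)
The plan is to apply a Horrocks-type duality, following Dao--Monta\~no, to each module of principal parts $\ModDif{n}{R}{K}$ separately, obtaining finite-level isomorphisms, and then pass to the direct limit in $n$. The first step is to verify that $\ModDif{n}{R}{K}$, viewed as an $R$-module via the left factor of $R\otimes_K R$, is locally free on the punctured spectrum $\Spec(R)\setminus\{R_+\}$. For each prime $\p \neq R_+$, the isolated singularity hypothesis forces $R_\p$ to be regular, and hence smooth over $K$ under the standard hypotheses implicit here; for a smooth $K$-algebra, the modules of principal parts are locally free. Consequently $\Ext^i_R(\ModDif{n}{R}{K},\omega_R)$ is $R_+$-torsion for every $i\geq 1$ and every $n$.

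The second step is to invoke the Dao--Monta\~no version of Horrocks' duality: if $M$ is a finitely generated $R$-module that is locally free on the punctured spectrum of a Cohen-Macaulay graded ring $R$ of dimension $d\geq 3$, and $N$ is a maximal Cohen-Macaulay $R$-module, then there is a natural isomorphism
\[\Ext^i_R(M,N)\cong H^{i+1}_{R_+}(\Hom_R(M,N))\]
for $1\leq i\leq d-2$. This can be read off from the Grothendieck spectral sequence $E_2^{p,q}=H^p_{R_+}(\Ext^q_R(M,N))$ for the composite $\Gamma_{R_+}\circ\Hom_R(M,-)$: the $R_+$-torsion property kills $E_2^{p,q}$ whenever $p\geq 1$ and $q\geq 1$, while the MCM property of $N$ forces the hypercohomology of $\Hom_R(F_\bullet,N)$ (for $F_\bullet$ a free resolution of $M$) to vanish in total degrees below $d$. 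Tracking which differentials $d_{i+1}\colon \Ext^i_R(M,N)\to H^{i+1}_{R_+}(\Hom_R(M,N))$ are forced to be injective and surjective pins down the asserted range.

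Applied with $M=\ModDif{n}{R}{K}$ and $N=\omega_R$, naturality in $M$ makes the isomorphism compatible with the surjections $\ModDif{n+1}{R}{K}\twoheadrightarrow\ModDif{n}{R}{K}$ defining the direct system, and compatible with the right $R$-action from the second tensor factor (which commutes with $\Ext$ and $H^*_{R_+}$ taken in the left $R$-variable). Hence the isomorphism is one of $\ModDif{}{R}{K}$-modules. Passing to the direct limit over $n$ and using that local cohomology commutes with filtered colimits yields
\[\df{i}{R}{K}(\omega_R)=\ilim \Ext^i_R(\ModDif{n}{R}{K},\omega_R)\cong\ilim H^{i+1}_{R_+}(\Diff{n}{R}{K}(\omega_R))\cong H^{i+1}_{R_+}(\Diff{}{R}{K}(\omega_R)).\]

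The two steps that I expect to require the most care are (i) the verification of local freeness of $\ModDif{n}{R}{K}$ on the punctured spectrum, in particular justifying the passage from regularity of $R_\p$ to smoothness over $K$ in whatever generality the theorem is meant to apply; and (ii) checking that the Horrocks-type isomorphism is genuinely natural in $M$, as this is what makes the direct limit operation and the $\ModDif{}{R}{K}$-module structure transfer cleanly. Neither is expected to be deep, but both are essential for the proof to produce an isomorphism of $\ModDif{}{R}{K}$-modules in the limit.
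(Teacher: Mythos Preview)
Your proposal is correct and follows essentially the same route as the paper: apply the Dao--Monta\~no/Horrocks isomorphism $\Ext^i_R(M,\omega_R)\cong H^{i+1}_{R_+}(\Hom_R(M,\omega_R))$ to each $M=\ModDif{n}{R}{K}$, noting that these modules are locally free on the punctured spectrum because principal parts commute with localization and $R$ is smooth away from $R_+$, and then pass to the direct limit. The paper simply cites \cite[Proposition~2.2]{DM} rather than sketching the spectral-sequence argument, and is terser about naturality, but the strategy is identical.
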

\begin{proof} By \cite[Proposition~2.2]{DM}, if $R$ is Cohen-Macaulay of Krull dimension at least 3, $M$ is a finitely generated module that is free on the punctured spectrum, and $1\leq i \leq d-2$, then there are isomorphisms $\Ext^i_R(M,\omega_R)\cong H^{i+1}_{R_+}(\Hom_R(M,\omega_R))$. Since the formation of $\ModDif{n}{R}{K}$ commutes with localization, $\ModDif{n}{R}{K}$ is free on the punctured spectrum. Thus, there are isomorphisms
	\[\Ext^i_R(\ModDif{n}{R}{K},\omega_R)\cong H^{i+1}_{R_+}\big(\Hom_R(\ModDif{n}{R}{K},\omega_R)\big)\cong H^{i+1}_{R_+}(\Diff{n}{R}{K}(\omega_R)) \] for each $n$. The statement follows from passing to the direct limit.
\end{proof}

\begin{remark} While Theorems~\ref{functors-field}~and~\ref{second-iso} are stated in the graded case, the analogous statements hold in the complete case. If $R$ is complete with coefficient field $K$, one replaces $\ModDif{}{R}{K}$ with its $\m$-adic completion $\wModDif{}{R}{K}\cong R\widehat{\otimes}_K R$, and $\ModDif{n}{R}{K}$ with $\wModDif{n}{R}{K}$, and the same proofs hold mutatis mutandis.
\end{remark}

\section{Vanishing criteria}

We recall that if $M$ is an $R$-module, and $I$ an ideal of $R$, there is an equality
\[ \inf\{n\in \NN \ | \   \Ext^n_R(R/I,M)\neq 0    \} = \inf\{n\in \NN \ | \    H^n(I;M)\neq 0      \} = \inf\{n\in \NN \ | \    H^n_I(M)\neq 0       \},  \]
where $H^n(I;M)$ denotes the Koszul cohomology \cite[Theorem~9.1]{24}. We call this common value the \emph{depth} of $I$ on $M$; if $R$ is local (or $\NN$-graded), the \emph{depth} of $M$ is the depth of the maximal (homogenous) ideal on $M$. If $M$ is finitely generated, this agrees with the usual notion of depth characterized by regular sequences.

As an easy corollary of Theorem~\ref{second-iso}, we see that the Smith--Van den Bergh functors determine the depth of the ring of differential operators for isolated Gorenstein singularities.

\begin{corollary}\label{depth-cor}
	Let $K$ be a field, and $R$ be a Gorenstein graded $K$-algebra that is a domain with an isolated singularity. Suppose that $d=\dim(R)\geqslant 3$. Then, 
	\[\mathrm{depth}_R(D_{R|K}) = \begin{cases} 1+ \min\{ i>0 \ | \ \df{i}{R}{K}(R)\neq 0 \} \quad &\text{if this number is $\leq d$}, \\
	d \quad &\text{otherwise}.\end{cases} \]\qedhere
\end{corollary}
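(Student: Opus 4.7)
The plan is to convert the depth computation into local-cohomology vanishing and then feed in Theorem~\ref{second-iso}. Since $R$ is Gorenstein, $\omega_R \cong R$, so that theorem furnishes an isomorphism $\df{i}{R}{K}(R) \cong H^{i+1}_{R_+}(\Diff{}{R}{K})$ for $1 \leq i \leq d-2$. Because $\mathrm{depth}_R(\Diff{}{R}{K}) = \inf\{j \geq 0 : H^j_{R_+}(\Diff{}{R}{K}) \neq 0\}$, the task reduces to controlling $H^j_{R_+}(\Diff{}{R}{K})$ also at the endpoints $j \in \{0,1,d\}$.

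I would handle the bottom end by showing $H^0_{R_+}(\Diff{}{R}{K}) = 0$ and $H^1_{R_+}(\Diff{}{R}{K}) = 0$. The first is immediate: the left $R$-action on an operator is by post-multiplication, so any $\delta$ killed by $R_+^n$ has image in $(0 :_R R_+^n) = 0$, using that $R$ is a domain. For the second I would write $\Diff{}{R}{K} = \ilim \Diff{n}{R}{K}$ with each $\Diff{n}{R}{K} \cong \Hom_R(\ModDif{n}{R}{K}, R)$ finitely generated, and invoke reflexivity of duals: under the hypotheses $R$ is normal (Cohen-Macaulayness yields $S_2$, and isolated singularity in dimension at least $3$ yields regularity in codimension one), so any finitely generated $R$-dual is reflexive and in particular satisfies Serre's $S_2$, giving $\mathrm{depth}_{R_+}(\Diff{n}{R}{K}) \geq 2$. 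Passing to the direct limit yields $H^1_{R_+}(\Diff{}{R}{K}) = 0$. For the top end, I would observe that $R \hookrightarrow \Diff{}{R}{K}$, $r \mapsto (x \mapsto rx)$, is a split left-$R$-linear inclusion, the retraction being $\delta \mapsto \delta(1)$; hence $R$ is an $R$-module summand of $\Diff{}{R}{K}$, and $0 \neq H^d_{R_+}(R) \hookrightarrow H^d_{R_+}(\Diff{}{R}{K})$ since $R$ is Cohen-Macaulay of dimension $d$.

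Assembling: for $2 \leq j \leq d-1$, $H^j_{R_+}(\Diff{}{R}{K}) \cong \df{j-1}{R}{K}(R)$ by Theorem~\ref{second-iso}, while $H^0$ and $H^1$ vanish and $H^d$ is nonzero. The depth is therefore $1 + \min\{i \geq 1 : \df{i}{R}{K}(R) \neq 0\}$ whenever this lies in $\{2, \ldots, d-1\}$, and $d$ otherwise, which is exactly the claimed piecewise formula. The main obstacle is the vanishing of $H^1_{R_+}(\Diff{}{R}{K})$: the argument requires recognizing normality from the hypotheses and applying $S_2$ to the finite-level duals before taking the direct limit, since $\Diff{}{R}{K}$ itself is typically not finitely generated. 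Everything else is routine degree-shifting between $\df{i}{R}{K}(-)$ and $H^{i+1}_{R_+}(-)$ combined with the splitting of $R$ off $\Diff{}{R}{K}$.
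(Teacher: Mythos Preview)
Your argument is correct and follows essentially the same route as the paper: deduce normality, show each $\Diff{n}{R}{K}\cong\Hom_R(\ModDif{n}{R}{K},R)$ has depth at least two, pass to the direct limit to kill $H^0_{R_+}$ and $H^1_{R_+}$, and then read off the depth from Theorem~\ref{second-iso}. Your proof is in fact slightly more complete than the paper's, since you explicitly verify $H^d_{R_+}(\Diff{}{R}{K})\neq 0$ via the splitting $\delta\mapsto\delta(1)$, which is needed to pin the depth at exactly $d$ in the second case but is left implicit in the published argument.
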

\begin{proof} The hypotheses imply that $R$ is normal. Since $R$ has depth at least two, the modules of differential operators $D^n_{R|K}\cong \Hom_R(\ModDif{n}{R}{K},R)$ of bounded order do as well \cite[{Tag 0AV5}]{stacks}. Thus, $H^0_{R_+}(D^n_{R|K})=H^1_{R_+}(D^n_{R|K})=0$ for each $n$, where $R_+$ is the maximal homogeneous ideal; one has $H^0_{R_+}(D_{R|K})=H^1_{R_+}(D_{R|K})=0$ by taking direct limits. The formula then follows from Theorem~\ref{second-iso} and the characterization of depth in terms of local cohomology.
\end{proof}

We also apply Theorem~\ref{functors-field} to give vanishing conditions on $\df{i}{R}{K}(\omega_R)$ in terms of meaningful geometric information.
Recall that a map of $K$-algebras $R\rightarrow S$ is \emph{radicial} if, for some (equivalently, every) algebraically closed field $L\supseteq K$, the map  $\Spec(S \otimes_K L) \rightarrow  \Spec(R \otimes_K L)$ is injective \cite[Definition~3.5.4]{EGAI}; in particular, if $K$ is algebraically closed, this is equivalent to $\Spec(S) \rightarrow  \Spec(R)$ being injective. We note that a radicial map of graded rings or local rings is necessarily module-finite by Nakayama's lemma, and hence is bijective on spectra.

We define the \emph{radicial rank} of a $K$-algebra $R$ to be
\[\rra_K(R)=\min\{ n \ | \ \exists f_1,\dots,f_n\in R \ \text{such that} \ K[f_1,\dots,f_n]\rightarrow R \ \text{is radicial} \}. \]
Since the arithmetic rank of an ideal $I\subseteq R$ can be characterized as 
\[ \ara(I)=\min\{ n \ | \ \exists f_1,\dots,f_n\in R \ \text{such that} \ R/(f_1,\dots,f_n) \rightarrow R/I \ \text{is bijective on Spec} \}, \]
we think of radicial rank as a subalgebra analogue of arithmetic rank. Geometrically, for affine varieties $X$ over algebraically closed fields, the radicial rank is the smallest dimension of an affine space to which $X$ maps injectively (as set-theoretic maps of $\Spec$).

If $R$ is a standard graded ring, so that $R=S/I$ for some standard graded polynomial ring, and $I$ a homogeneous ideal, then the quotient map $S\to R$ induces an embedding of $X=\Proj(R)$ into a projective space. The radicial rank provides a lower bound for the dimension of the secant variety of this embedding: $\rra_K(R)\leq \dim(\Sec(X))+1$. This is well-known; see, e.g., \cite[Corollary~4.3]{DufJef2}. We note that this inequality can be sharp; see ibid.~for multiple examples. We also note that the radicial rank is independent of the presentation of $R$, and does not require any a priori graded structure.

The following proposition is closely related to \cite[\S3]{DufJef} and \cite[\S4]{DufJef2}.
\begin{proposition}\label{radrank}
	Let $K$ be a field, and $R$ be a Cohen-Macaulay graded $K$-algebra. Then \[\max\{i \ | \ \df{i}{R}{K}(\omega_R)\neq 0 \} \leq \rra_K(R)-\dim(R).\] If $R=K[x_0,\dots,x_n]/I$ is standard graded, $X=\Proj(R)\subseteq \mathbb{P}^n$, and $\Sec(X)$ is the secant variety of $X$, then \[\max\{i \ | \ \df{i}{R}{K}(\omega_R)\neq 0 \} \leq \dim(\Sec(X)) - \dim(X).\]\qedhere
\end{proposition}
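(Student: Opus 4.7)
The plan is to invoke Theorem~\ref{functors-field} to rewrite
\[ \df{i}{R}{K}(\omega_R) \cong H^{d+i}_{\Delta_{R|K}}(\omega_{\ModDif{}{R}{K}}), \]
and then bound the vanishing of these local cohomology modules by controlling the arithmetic rank (up to radical) of $\Delta_{R|K}$ using the radicial rank. Let $n = \rra_K(R)$ and choose $f_1, \dots, f_n \in R$ such that $A := K[f_1, \dots, f_n] \hookrightarrow R$ is radicial; by the observation preceding the statement, this inclusion is automatically module-finite, so $\ModDif{}{R}{K} = R \otimes_K R$ is module-finite over $R \otimes_A R$, and in particular both rings are Noetherian.

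Set $J := (f_1 \otimes 1 - 1 \otimes f_1, \dots, f_n \otimes 1 - 1 \otimes f_n) \subseteq \ModDif{}{R}{K}$. Visibly $J \subseteq \Delta_{R|K}$, and in fact $J$ is precisely the kernel of the surjection $R \otimes_K R \twoheadrightarrow R \otimes_A R$. The crux of the argument is the radical identification $\sqrt{J} = \sqrt{\Delta_{R|K}}$. Granting this, local cohomology depends only on the radical of the defining ideal, so $H^i_{\Delta_{R|K}}(-) = H^i_J(-)$ as functors; since $J$ has $n$ generators, the \v{C}ech complex computing $H^i_J$ has length $n$, forcing $H^i_J(-) = 0$ for $i > n$. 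Consequently $\df{i}{R}{K}(\omega_R) = 0$ whenever $d+i > n$, which is exactly the first inequality.

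To establish the radical identification, observe that since $A \to R$ is radicial, the diagonal map $\Spec(R) \to \Spec(R \otimes_A R)$ is surjective on underlying topological spaces. Therefore the kernel $\Delta_{R|A}$ of the multiplication $R \otimes_A R \twoheadrightarrow R$ is contained in every prime of $R \otimes_A R$, i.e., in the nilradical; Noetherianity then gives $\Delta_{R|A}^N = 0$ for some $N$. Lifting back to $\ModDif{}{R}{K}$ yields $\Delta_{R|K}^N \subseteq J$, hence $\sqrt{\Delta_{R|K}} \subseteq \sqrt{J}$, and the reverse containment is automatic from $J \subseteq \Delta_{R|K}$.

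For the second inequality, the bound $\rra_K(R) \leq \dim(\Sec(X)) + 1$ is recalled just above the statement (see \cite[Corollary~4.3]{DufJef2}), and $\dim(R) = \dim(X) + 1$ because $R$ is standard graded; subtracting gives the conclusion. The only substantive step is the identification $\sqrt{J} = \sqrt{\Delta_{R|K}}$, which in turn reduces to the standard fact that a radicial morphism has nilpotent relative diagonal; everything else is \v{C}ech-cohomological bookkeeping.
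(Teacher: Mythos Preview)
Your proof is correct and essentially identical to the paper's: both identify the ideal $J=(f_i\otimes 1-1\otimes f_i)$ with $\Delta_{A|K}\cdot\ModDif{}{R}{K}$ (the paper writes $\Delta_{S|K}\ModDif{}{R}{K}$), use the characterization of radicial maps via surjectivity of the diagonal $\Spec(R)\to\Spec(R\otimes_A R)$ to obtain $\sqrt{J}=\sqrt{\Delta_{R|K}}$, and then bound the local cohomology of $\omega_{\ModDif{}{R}{K}}$ by the number of generators of $J$. One minor slip: $R\otimes_K R$ is not module-finite over $R\otimes_A R$ but rather \emph{surjects} onto it---Noetherianity of $R\otimes_A R$ follows from that surjection, and is in any case unnecessary, since $\Delta_{R|A}$ lying in the nilradical already yields $\Delta_{R|K}\subseteq\sqrt{J}$ without passing to a uniform power.
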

\begin{proof} Let $d=\dim(R)$.
	By \cite[Lemma~1.8.7.1]{EGAIV1}, $S\rightarrow R$ is radicial if and only if the map $R\otimes_S R\rightarrow R$ induces a surjection on spectra. We have isomorphisms \[R\otimes_S R \cong \ModDif{}{R}{K}/(\Delta_{S|K}\ModDif{}{R}{K})\] and $R \cong\ModDif{}{R}{K}/\Delta_{R|K}$, so $S\rightarrow R$ is radicial if and only if $\sqrt{\Delta_{S|K}\ModDif{}{R}{K}}=\sqrt{\Delta_{R|K}}$.
	
	Let $W$ be a canonical module for $\ModDif{}{R}{K}$. If  \[\df{i}{R}{K}(\omega_R)\cong H^{d+i}_{\Delta_{R|K}}(W)\neq 0 \quad \text{ and } \quad \sqrt{\Delta_{S|K}\ModDif{}{R}{K}}=\sqrt{\Delta_{R|K}},\] then $H^{d+i}_{\Delta_{S|K}}(W)\neq 0$. Consequently, $\Delta_{S|K}$ cannot be generated by fewer than $d+i$ elements. For any $n$-generated $K$-algebra, its diagonal ideal is generated by $n$ elements. Thus, $\df{i}{R}{K}(\omega_R)\neq 0$ implies $\rra_K(R)\geq d+i$, which establishes the first inequality of the statement.
\end{proof}

\begin{remark}
	We note that the local cohomology modules occurring in Proposition~\ref{functors-field} are closely related to those that appear in the work \cite{DufJef,DufJef2} of Emilie Dufresne and the present author on invariant theory. A \emph{separating set} for an action of a group $G$ on a polynomial ring $S=\Sym(V^*)$ induced by a representation $V$ of $G$ consists of a set of invariants $\{f_1,\dots, f_t\}\subset S^G$ such that for any $v,w\in V$ with $h(v)\neq h(w)$ for some $h\in S^G$, there is an $f_i$ with $f_i(v)\neq f_i(w)$; we refer the reader to \cite[Section~2.4]{DerKem} for an introduction to separating sets. By \cite[Theorem~2.2]{Duf}, if $G$ is reductive and $K=\bar{K}$, then $\{f_1,\dots, f_t\}$ is a separating set if and only if $K[f_1,\dots, f_t]\rightarrow S^G$ is radicial. In \cite{DufJef}, we show that the smallest cardinality of a separating set for a linear action of a finite group $G$ on a polynomial ring $R$ is bounded below by $\max\{ i \ | \ H^i_{\Delta_{R|K}}(\ModDif{}{S}{K})\neq 0\}$, where $R=S^G$. This is applied to give meaningful lower bounds in terms of properties of the representation. A similar approach is executed in \cite{DufJef2} to bound separating sets for actions of tori. It follows from Remark~\ref{CD} that the bounds given by Proposition~\ref{radrank} are always at least as strong as those mentioned above; they also do not require the ring $R$ to be realized as a subring of a polynomial ring.	\end{remark}
	
	Computation of cohomological dimension is difficult in general, so direct application of Proposition~\ref{radrank} to study radicial rank and secant dimension is challenging. Examples to this effect appear in the work \cite{DufJef,DufJef2} discussed above. We give a couple more applications based on the following corollary.

\begin{corollary}\label{cor-CM}
Let $R$ be a Gorenstein $\NN$-graded algebra of dimension $d>3$ with an isolated singularity, and $R_0=K$, a field. Suppose that $D_{R|K}$ has depth~$d$. Then, either the functor $D_{R|K}(-)$ is exact, or $\rra_K(R)\geq 2d-1$.
\end{corollary}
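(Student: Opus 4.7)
The plan is to combine the two descriptions of Theorem~\ref{functors-field}/Theorem~\ref{second-iso} with Lemma~\ref{dcd} and Proposition~\ref{radrank}. Since $R$ is Gorenstein, $\omega_R \cong R$, and the hypothesis $d = \dim(R) > 3$ together with the isolated singularity assumption puts us squarely in the range where Theorem~\ref{second-iso} applies. My first step is to exploit the depth hypothesis on $D_{R|K}$ to kill the low-degree Smith--Van den Bergh functors.

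Concretely, for $1 \leq i \leq d-2$, Theorem~\ref{second-iso} gives
\[
\df{i}{R}{K}(R) \;\cong\; H^{i+1}_{R_+}\bigl(D_{R|K}(R)\bigr) \;=\; H^{i+1}_{R_+}(D_{R|K}).
\]
The assumption $\mathrm{depth}_R(D_{R|K}) = d$ forces $H^{j}_{R_+}(D_{R|K}) = 0$ for all $j < d$; with $j = i+1$ this yields $\df{i}{R}{K}(R) = 0$ for every $i$ in the range $1 \leq i \leq d-2$.

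Next I split on whether any higher $\df{i}{R}{K}(R)$ is nonzero. If $\df{i}{R}{K}(R) = 0$ for all $i > 0$, then Lemma~\ref{dcd} (applied with $s = 0$) gives that $D_{R|K}(-)$ is exact, which is the first alternative in the statement. Otherwise there exists some $i > 0$ with $\df{i}{R}{K}(R) \neq 0$, and by the vanishing established in the previous step this $i$ must satisfy $i \geq d-1$. In particular,
\[
\max\{\, i \;|\; \df{i}{R}{K}(R) \neq 0 \,\} \;\geq\; d-1.
\]

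Finally, I plug this into Proposition~\ref{radrank}, which (using $\omega_R \cong R$) gives the inequality $\rra_K(R) \geq d + \max\{i \mid \df{i}{R}{K}(R) \neq 0\}$, yielding $\rra_K(R) \geq d + (d-1) = 2d-1$. No step here is a serious obstacle, since the corollary is essentially a formal consequence of three results already proved; the only thing worth double-checking is that the range $1 \leq i \leq d-2$ of Theorem~\ref{second-iso} is nonempty and covers everything up to $d-2$, which is precisely why the hypothesis $d > 3$ is imposed (it guarantees $d-1 \geq 3$ and makes the two alternatives in the conclusion genuinely distinct).
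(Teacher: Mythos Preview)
Your proof is correct and follows essentially the same route as the paper's own argument. The only cosmetic difference is that the paper invokes Corollary~\ref{depth-cor} to obtain the vanishing of $\df{i}{R}{K}(R)$ for $1\leq i\leq d-2$, whereas you go directly through Theorem~\ref{second-iso}; since Corollary~\ref{depth-cor} is itself an immediate consequence of Theorem~\ref{second-iso}, this is not a substantive distinction.
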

\begin{proof}
We have $H^i_{R_+}(D_{R|K})=0$ for all $i\leq d-1$, and hence by Corollary~\ref{depth-cor}, ${\df{i}{R}{K}(R)=0}$ for all $1 \leq i \leq d-2$. Thus, either $\df{i}{R}{K}(R)=0$ for all $i\neq 0$, in which case $D_{R|K}(-)$ is exact, or else $\max\{ i \ | \ \df{i}{R}{K}(R) \neq 0\} \in \{ d-1, d\}$, and the conclusion follows from Proposition~\ref{radrank}.
\end{proof}

\begin{example}
	Let $K$ be a field of characteristic zero, and $M$ be a $3\times 3$ matrix of indeterminates. Let $K[M]$ be the polynomial ring generated by the entries of $M$, and $R=K[M]/I_2(M)$, where $I_2(M)$ is the ideal of $2\times 2$ minors of $M$. The ring $R$ is a $5$-dimensional ring, with an isolated Gorenstein singularity at the homogeneous maximal ideal.  The secant variety of $X=\Proj(R)$ is $\Proj(K[M]/\det(M))$, which has dimension~$7$. By Proposition~\ref{radrank}, $\df{i}{R}{K}(R)=0$ for $i>3$.
	
	We claim that $\df{i}{R}{K}(R)\neq 0$ for some $i>0$. Equivalently, we show that ${H^i_{\Delta_{R|K}}(\ModDif{}{R}{K})\neq 0}$ for some $i>5$. Let $C_i\subset R$ for $i=1,2,3$ be the ideal generated by the images of the variables in the $i$th column of $M$. Consider the ideal
	\[ J = (C_2 \otimes 1 + C_3 \otimes 1 + 1 \otimes C_1 + 1 \otimes C_3) \subset \ModDif{}{R}{K}. \]
	The quotient ring $\ModDif{}{R}{K}/J$ is a polynomial ring in $6$ variables, and the image of $\Delta_{R|K}$ generates its homogeneous maximal ideal, so $H^6_{\Delta_{R|K}}(\ModDif{}{R}{K}/J)\neq 0$. Thus, by Remark~\ref{CD}, $H^i_{\Delta_{R|K}}(\ModDif{}{R}{K})\neq 0$ for some $i\geq 6$. Equivalently, $D_{R|K}(-)$ is not exact.
	
	Consequently, by Corollary~\ref{cor-CM}, $D_{R|K}$ is not Cohen-Macaulay. We note that $R$ is an invariant ring of an action of a torus $T$ on a polynomial ring $S$, and $D_{R|K}=D_{S|K}^T$ for this action \cite[Case A, Main Theorem
0.3, 0.7]{LS}. In particular, this gives a counterexample to the analogue of the Hochster--Roberts Theorem \cite{HocRob} for actions of linearly reductive groups on the Weyl algebra.

Note also that, by Remark~\ref{BZNrmk}, $R$ is an explicit example of a ring for which Theorem~\ref{functors-field} holds, but the Grothendieck-Sato theorem of \cite{BZN} does not apply.
\end{example}

\begin{example}\label{example-simplicial}
We can apply Corollary~\ref{cor-CM} to simplicial affine toric varieties with an isolated Gorenstein singularity. 

If $K$ is a field of positive characteristic $p$, by \cite[Theorem~7.2]{FMS}, $\End_{R^{p^e}}(R)$ is a Cohen-Macaulay $R$-module for each $e$. Then, by the isomorphism \eqref{eq:poschar} of the introduction, and passing to direct limits, one obtains that $H^i_{R_+}(D_{R|K})=0$ for all $i<d$. Thus, either $D_{R|K}(-)$ is exact, or $\rra_K(R) \geq 2d-1$.

To obtain a similar result in characteristic zero, we observe the following. If $R$ is a normal semigroup ring over a field $K$ of characteristic zero, there is a a semigroup ring $A$ over $\ZZ$ such that $A \otimes_{\ZZ} K\cong R$. If $R$ admits an injective map into affine $n$-space, the same holds for $A\otimes_{\ZZ} {\FF_p}$ for all but finitely many $p$; indeed, this is a consequence of the fact that injectivity of a given map can be characterized by existence of a solution to a finite set of equations.

Thus, for $R$ as above in characteristic zero, either the functor $D_{A/pA | \FF_p}(-)$ is exact for all but finitely many $p$, or $\rra_K(R)\geq 2d-1$.

We exhibit both cases explicitly. If $R$ is a polynomial ring, then $D_{R|K}(-)$ is exact. If $R$ is the $d$-th veronese ring of a polynomial ring of dimension $d$, and $d$ is invertible in $K$, then $R$ is a Gorenstein simplicial toric ring, and $\df{d-1}{R}{K}(R)\neq 0$ by \cite[Theorem~3.4]{DufJef} and Theorem~\ref{functors-field}, so $\rra_K(R)\geq 2d-1$.
\end{example}

The example above and Remark~\ref{BZNrmk} motivate the following question:

\begin{question}
For which $K$-algebras $R$ is the functor $D_{R|K}(-)$ exact?
\end{question}

We are not aware of any examples of finitely generated $K$-algebras for which $D_{R|K}(-)$ is exact that do not admit a radicial map to a polynomial ring.

%

\section{Reduction of differential operators to positive characteristic}

In this section, we apply Theorem~\ref{functors-field} to determine when differential operators behave well under reduction to positive characteristic. As mentioned in the introduction, this is governed by the first Smith--Van den Bergh functor:

\begin{proposition}[Smith--Van den Bergh {\cite[Subsection~5.1]{SmithVDB}}]\label{SVDB-reduction} Let $A$ be a Dedekind domain, and $R$ be an $A$-algebra such that $\ModDif{n}{R}{A}$ is a projective $A$-module for all $n$; in particular, $R$ is $A$-projective. Then, for every $A$-algebra $B$, there is a short exact sequence
	\[ 0 \longrightarrow \Diff{}{R}{A}(R) \otimes_A B \longrightarrow \Diff{}{(R\otimes_A B)\,}{\,B}(R\otimes_A B) \longrightarrow \Tor^A_{1}(B, \df{1}{R}{A}(R))\longrightarrow 0 . \]
\end{proposition}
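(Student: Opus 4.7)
The plan combines a base-change formula for principal parts with the universal coefficient theorem over the Dedekind base $A$.

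First, I would identify the middle term of the proposed short exact sequence as $\Diff{}{R}{A}(R \otimes_A B)$. An elementary computation from the definition of principal parts gives a natural isomorphism $\ModDif{n}{(R \otimes_A B)}{B} \cong \ModDif{n}{R}{A} \otimes_A B$, and Hom-tensor adjunction then yields
$$\hom{R \otimes_A B}{\ModDif{n}{(R \otimes_A B)}{B}}{R \otimes_A B} \cong \hom{R}{\ModDif{n}{R}{A}}{R \otimes_A B}.$$
Passing to $\varinjlim_n$ identifies $\Diff{}{(R\otimes_A B)\,}{\,B}(R \otimes_A B) \cong \Diff{}{R}{A}(R \otimes_A B)$, so the task becomes to compare the latter with $\Diff{}{R}{A}(R) \otimes_A B$ and $\df{1}{R}{A}(R)$.

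Next, for each $n$ I would pick a resolution $P_\bullet \to \ModDif{n}{R}{A}$ by finitely generated free $R$-modules, available because $R$ is Noetherian and $\ModDif{n}{R}{A}$ is finitely generated over $R$. Set $C^\bullet_n := \hom{R}{P_\bullet}{R}$. The $n=0$ case of the standing hypothesis says $R \cong \ModDif{0}{R}{A}$ is $A$-projective, hence $A$-flat, so each $C^i_n$ is a finitely generated free $R$-module, and in particular $A$-flat. Finite generation of the $P_i$ likewise gives
$$C^\bullet_n \otimes_A B \cong \hom{R}{P_\bullet}{R \otimes_A B},$$
whose cohomology computes $\Ext^i_R(\ModDif{n}{R}{A}, R \otimes_A B)$.

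Now I would apply the universal coefficient theorem. Since $A$ is Dedekind, hence hereditary, submodules of $A$-flat modules are $A$-flat, so the standard UCT for complexes of flat $A$-modules applies to $C^\bullet_n$. In cohomological degree zero it yields
\[
0 \to \hom{R}{\ModDif{n}{R}{A}}{R} \otimes_A B \to \hom{R}{\ModDif{n}{R}{A}}{R \otimes_A B} \to \Tor_1^A(B, \Ext^1_R(\ModDif{n}{R}{A}, R)) \to 0.
\]
This sequence is natural in $n$, so $\varinjlim_n$ preserves its short exactness. Combined with the facts that $- \otimes_A B$ and $\Tor_1^A(B, -)$ commute with filtered colimits and that $\varinjlim_n \Ext^1_R(\ModDif{n}{R}{A}, R) = \df{1}{R}{A}(R)$ by definition, this delivers the short exact sequence in the statement.

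The most delicate step is setting up the universal coefficient theorem cleanly for the (potentially unbounded) complex $C^\bullet_n$; the crucial ingredient is that hereditariness of $A$ promotes $A$-flatness from the terms of the complex to its cycles and boundaries, so the usual argument via the short exact sequences $0 \to Z^i \to C^i_n \to B^{i+1} \to 0$ and $0 \to B^i \to Z^i \to H^i \to 0$ of flat $A$-modules goes through. A secondary technical point is functoriality of these UCT sequences in $n$, required to pass to the colimit; this should follow from the naturality of the connecting maps in the standard proof.
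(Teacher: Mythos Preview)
The paper does not give its own proof of this proposition; it is quoted from \cite[Subsection~5.1]{SmithVDB} and used as a black box. So there is no ``paper's proof'' to compare against, and the question is simply whether your argument stands on its own. It does: the base-change identity $\ModDif{n}{(R\otimes_A B)}{B}\cong \ModDif{n}{R}{A}\otimes_A B$ together with Hom--tensor adjunction correctly identifies the middle term, and the universal coefficient sequence over the hereditary ring $A$ (applied to $\Hom_R(P_\bullet,R)$ and then passed to the colimit in $n$) produces exactly the desired short exact sequence.

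Two small remarks. First, you invoke a resolution of $\ModDif{n}{R}{A}$ by \emph{finitely generated} free $R$-modules, which requires $\ModDif{n}{R}{A}$ to be a finitely generated $R$-module. The proposition as stated does not literally assume this; it holds when $R$ is essentially of finite type over $A$ (as noted earlier in the paper), which is the situation of interest, so this is harmless in context but worth flagging. Second, your argument in fact uses only that $R$ is $A$-flat and $A$ is hereditary; the stronger hypothesis that every $\ModDif{n}{R}{A}$ is $A$-projective is never invoked beyond the case $n=0$. That is not a gap---it just means your UCT route needs less than the stated hypotheses.
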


\begin{proposition}\label{prop-reduction}
	Let $R$ be a Gorenstein $d$-dimensional Noetherian $\NN$-graded ring with $R_0=A$ a Dedekind domain. Suppose that $R$ is flat over $A$. Let $p\in A$ be a prime element. The following are equivalent:
	\begin{enumerate}
		\item The local cohomology module $H^d_{\Delta_{R|A}}(\ModDif{}{R}{A})$ has no $p$-torsion.
		\item There is an isomorphism $\Diff{}{\overline{R}}{\overline{A}}  \cong \Diff{}{R}{A}\otimes_{A} \overline{A}$, where $\overline{R}=R/pR$ and $\overline{A}=A/pA$. \qedhere
	\end{enumerate}
\end{proposition}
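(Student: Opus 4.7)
The proof combines the Smith--Van den Bergh exact sequence of Proposition~\ref{SVDB-reduction} with the Gorenstein case of a Dedekind-base analogue of Theorem~\ref{functors-field}. First I would verify the hypotheses of Proposition~\ref{SVDB-reduction}: since $R$ is $\NN$-graded and flat over the Dedekind domain $A$, each graded component of $R$ is finitely generated and torsion-free over $A$ and therefore projective, so the same holds for each $\ModDif{n}{R}{A}$. Applying the proposition with $B=\overline{A}=A/pA$ yields the short exact sequence
\begin{equation*}
0 \longrightarrow \Diff{}{R}{A}\otimes_A \overline{A} \longrightarrow \Diff{}{\overline{R}}{\overline{A}} \longrightarrow \Tor^A_1(\overline{A}, \df{1}{R}{A}(R))\longrightarrow 0,
\end{equation*}
so condition (2) is equivalent to the vanishing of $\Tor^A_1(\overline{A}, \df{1}{R}{A}(R))$.

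The next step is routine: tensoring the free resolution $0\to A\xrightarrow{\cdot p} A\to \overline{A}\to 0$ with $\df{1}{R}{A}(R)$ identifies $\Tor^A_1(\overline{A}, \df{1}{R}{A}(R))$ with the $p$-torsion submodule of $\df{1}{R}{A}(R)$. Hence (2) becomes the assertion that $\df{1}{R}{A}(R)$ has no $p$-torsion.

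To tie this to condition (1), I would identify $\df{1}{R}{A}(R)$ with $H^d_{\Delta_{R|A}}(\ModDif{}{R}{A})$ through the Gorenstein instance of Theorem~\ref{functors-field}, adapted to the Dedekind base. Since $R$ is Gorenstein of dimension $d$ and flat over the $1$-dimensional Gorenstein ring $A$, we have $\omega_R\cong R$; the enveloping algebra $P:=\ModDif{}{R}{A}$ is Gorenstein of dimension $2d-1$ with $\omega_P\cong P$, and the diagonal ideal $\Delta_{R|A}$ has height $d-1$ in $P$, coinciding with $\mathrm{height}(R_+)$. Running the argument of Theorem~\ref{functors-field}---graded local duality over $R$ and over $P$, together with the observation that $H^\bullet_{R_+}$ and $H^\bullet_{P_+}$ agree on the finitely generated $R$-module $\ModDif{n}{R}{A}$---produces, for each $n$, an isomorphism
\begin{equation*}
\Ext^1_R(\ModDif{n}{R}{A},\omega_R)\cong \Ext^{d}_P(\ModDif{n}{R}{A},\omega_P)
\end{equation*}
natural in $n$. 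Taking the direct limit and invoking the Gorenstein identifications gives $\df{1}{R}{A}(R)\cong H^d_{\Delta_{R|A}}(P)$.

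The main obstacle is justifying graded local duality over the Dedekind base $A$ rather than over a field or complete local ring as stated in Theorem~\ref{functors-field}. A natural route is to localize and $p$-adically complete at $p$: the hypotheses of Theorem~\ref{functors-field} then hold verbatim over $\widehat{A_{(p)}}$, and the $p$-torsion of both sides is detected by the faithfully flat base change $A_{(p)}\to \widehat{A_{(p)}}$. Alternatively, one can develop graded local duality directly, using that $A$ is itself Gorenstein of dimension one and that $R$, $P$ are Gorenstein ${}^*$local $A$-algebras admitting ${}^*$-dualizing complexes. Once the identification $\df{1}{R}{A}(R)\cong H^d_{\Delta_{R|A}}(P)$ is in place, the equivalence of (1) and (2) is immediate from the first two steps.
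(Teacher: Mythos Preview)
Your approach is sound but differs from the paper's in a meaningful way. You invoke Proposition~\ref{SVDB-reduction} as a black box and then seek to identify $\df{1}{R}{A}(R)$ with $H^d_{\Delta_{R|A}}(\ModDif{}{R}{A})$, which forces you to extend Theorem~\ref{functors-field} to a Dedekind base. The paper avoids both steps: it passes at once to the faithfully flat base change $A\to A'=\widehat{A_{(p)}}$, uses the short exact sequence $0\to \ModDif{}{R'}{A'}\xrightarrow{p}\ModDif{}{R'}{A'}\to \ModDif{}{\overline{R}}{\overline{A}}\to 0$, and reads off the long exact sequence in $H^\bullet_{\Delta}$. Since $A'$ is complete local and $\overline{A}$ is a field, Theorem~\ref{functors-field} applies \emph{verbatim} to identify $H^{d-1}_{\Delta}(\ModDif{}{R'}{A'})\cong D_{R'|A'}$ and $H^{d-1}_{\Delta}(\ModDif{}{\overline{R}}{\overline{A}})\cong D_{\overline{R}|\overline{A}}$, and the equivalence falls out of the exact sequence. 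Your completion route~(a) is essentially this move, but the paper threads it through the local cohomology sequence rather than through $\df{1}{R}{A}$, so it never needs Theorem~\ref{functors-field} over $A$ itself, nor Proposition~\ref{SVDB-reduction}.

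One point in your argument needs more care: you assert that each $\ModDif{n}{R}{A}$ is $A$-projective because its graded pieces are finitely generated and torsion-free. Finite generation is clear, but torsion-freeness of $\ModDif{n}{R}{A}=(R\otimes_A R)/\Delta^{n+1}$ does not follow formally from flatness of $R$ over $A$: a quotient of an $A$-flat module need not be $A$-flat, and in fact the associated graded pieces $\Delta^n/\Delta^{n+1}$ can carry $A$-torsion even under your hypotheses. You would need a separate argument here to invoke Proposition~\ref{SVDB-reduction}; the paper's route bypasses this issue entirely.
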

%
%
\begin{proof}
Set $A'=\widehat{A_{(p)}}$ and $R'=R\otimes_A A'$. Note that $A\rightarrow A'$ is faithfully flat. We have $\ModDif{}{R'}{A'}\cong \ModDif{}{R}{A}\otimes_A A'$ and $H^d_{\Delta_{R'|A'}}(\ModDif{}{R'}{A'})\cong H^d_{\Delta_{R|A}}(\ModDif{}{R}{A})\otimes_A A'$, using the behavior of local cohomology under flat base change. Consequently, existence of $p$-torsion in the modules $H^d_{\Delta_{R'|A'}}(\ModDif{}{R'}{A'})$ and $H^d_{\Delta_{R|A}}(\ModDif{}{R}{A})$ is equivalent.

By the flatness hypotheses, $p$ is a nonzerodivisor on $\ModDif{}{R'}{A'}$, so there is a short exact sequence
\[ 0 \xrightarrow{} \ModDif{}{R'}{A'} \xrightarrow{p}   \ModDif{}{R'}{A'}   \xrightarrow{} \ModDif{}{\overline{R}}{\overline{A}} \xrightarrow{} 0.\] There is then a long exact sequence of local cohomology
\begin{align*} \cdots \xrightarrow{} H^{d-1}_{\Delta_{R'|A'}}(\ModDif{}{R'}{A'}) &\xrightarrow{p} H^{d-1}_{\Delta_{R'|A'}}(\ModDif{}{R'}{A'}) \xrightarrow{} \\ &H^{d-1}_{\Delta_{\overline{R}|\overline{A}}}(\ModDif{}{\overline{R}}{\overline{A}}) \xrightarrow{} H^d_{\Delta_{R|A}}(\ModDif{}{R}{A})  \xrightarrow{p} H^d_{\Delta_{R|A}}(\ModDif{}{R}{A})  \xrightarrow{} \cdots,\end{align*}
which by Theorem~\ref{functors-field} yields
\[ \cdots \xrightarrow{}  D_{R' | A'} \xrightarrow{p} D_{R' | A'} \xrightarrow{}D_{\overline{R} | \overline{A}} \xrightarrow{} H^d_{\Delta_{R|A}}(\ModDif{}{R}{A})  \xrightarrow{p} H^d_{\Delta_{R|A}}(\ModDif{}{R}{A})  \xrightarrow{} \cdots.\]
Thus, the natural map $D_{R|A} \otimes_A \overline{A} \cong D_{R' | A'} \otimes_{A'} \overline{A}  \xrightarrow{}D_{\overline{R} | \overline{A}}$ is surjective if and only if multiplication by $p$ is injective on $H^d_{\Delta_{R|A}}(\ModDif{}{R}{A})$, as required.
\end{proof}

We can apply this observation to show that differential operators behave well under reduction to positive characteristic in certain circumstances.  The next three results provide positive answers to \cite[Question~5.1.2]{SmithVDB} under stronger hypotheses. In particular, the following theorem covers the case of \cite[Question~5.1.2]{SmithVDB} corresponding to the principal case of the main result of \cite{SmithVDB}; see \cite[Proposition~3.1.6]{SmithVDB}.

\begin{theorem}\label{splitting-reduction}
	Let $R$ be a graded subring of $S=\ZZ[x_1,\dots,x_m]$ such that $R$ is a direct summand of $S$ as an $R$-module. Suppose also that $R$ is Gorenstein. There is an $n\in A$ such that, after replacing $R$ by $R[1/n]$ and $\ZZ$ by $A=\ZZ[1/n]$, there are isomorphisms \[\Diff{}{\overline{R}}{\FF_p}  \cong \Diff{}{R}{A}\otimes_{A} \FF_p, \quad \text{where}\ \overline{R}=R/pR,\] for every prime $p\in \ZZ$ that does not divide $n$.
\end{theorem}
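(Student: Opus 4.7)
The plan is to combine Proposition~\ref{prop-reduction} with the splitting hypothesis and a deep finiteness result for associated primes of local cohomology modules of polynomial rings over $\ZZ$. First, since $R$ is a subring of $S = \ZZ[x_1,\dots,x_m]$, it is $\ZZ$-torsion-free and hence flat over $\ZZ$; together with the Gorenstein and grading hypotheses, the hypotheses of Proposition~\ref{prop-reduction} are satisfied over $A = \ZZ[1/n]$ for any $n$. By that proposition, the desired isomorphism $\Diff{}{\overline{R}}{\FF_p} \cong \Diff{}{R}{A}\otimes_A \FF_p$ holds for a prime $p \nmid n$ exactly when the local cohomology module $H^d_{\Delta_{R|A}}(R \otimes_A R)$ has no $p$-torsion (and this module equals $\df{1}{R}{A}(R)$ in the Gorenstein case by Theorem~\ref{functors-field}). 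The task is therefore to find $n$ such that $p$-torsion in this module vanishes for all primes $p \nmid n$.

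The splitting hypothesis transfers the computation to a polynomial ring. If $\pi\colon S \to R$ is an $R$-linear splitting of $R \hookrightarrow S$, then $\pi \otimes_A \pi\colon S \otimes_A S \to R \otimes_A R$ is an $(R \otimes_A R)$-linear splitting of the inclusion $R \otimes_A R \hookrightarrow S \otimes_A S$; the $(R \otimes_A R)$-linearity follows directly from $R$-linearity of $\pi$ on each factor. Since the functor $H^d_{\Delta_{R|A}}(-)$ is additive on $(R \otimes_A R)$-modules, I conclude that $H^d_{\Delta_{R|A}}(R \otimes_A R)$ is an $(R \otimes_A R)$-module direct summand of $H^d_{\Delta_{R|A}}(S \otimes_A S)$.

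The final step is to invoke the theorem of Bhatt--Blickle--Lyubeznik--Singh--Zhang on finiteness of associated primes of local cohomology modules of smooth $\ZZ$-algebras, applied to $S \otimes_A S \cong A[x_1,\dots,x_m,y_1,\dots,y_m]$. This yields that $H^d_{\Delta_{R|A}}(S \otimes_A S)$ has finitely many associated primes, hence so does its direct summand $H^d_{\Delta_{R|A}}(R \otimes_A R)$. Only finitely many primes $p \in \ZZ$ are contained in any of these associated primes, so enlarging $n$ to be divisible by all of them ensures no $p$-torsion for $p \nmid n$, yielding the theorem via Proposition~\ref{prop-reduction}.

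The main obstacle is the invocation of the Bhatt--Blickle--Lyubeznik--Singh--Zhang finiteness theorem; once that is available, the splitting argument (which is essentially formal) and the translation via Proposition~\ref{prop-reduction} are routine. One small technical point worth checking along the way is that a graded splitting $\pi$ can be chosen (so that everything is compatible with the grading), but this is immediate by taking the degree-zero component of any set-theoretic splitting.
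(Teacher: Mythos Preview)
Your proposal is correct and follows essentially the same route as the paper: use the splitting $R\hookrightarrow S$ to exhibit $H^d_{\Delta_{R|A}}(R\otimes_A R)$ as a direct summand of $H^d_{\Delta_{R|A}}(S\otimes_A S)$, invoke the Bhatt--Blickle--Lyubeznik--Singh--Zhang finiteness theorem for local cohomology of the smooth $\ZZ$-algebra $S\otimes_A S$ to bound the $p$-torsion, and conclude via Proposition~\ref{prop-reduction}. The paper's version is terser but the argument is the same; your added remarks on flatness and on why $\pi\otimes\pi$ is an $(R\otimes_A R)$-linear splitting are correct elaborations.
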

\begin{proof}
	Since $R$ is a graded direct summand of $S$, $\ModDif{}{R}{A}$ is a graded direct summand of $\ModDif{}{S}{A}$, as is $H^i_{\Delta_{R|A}}(\ModDif{}{R}{A})$ of $H^i_{\Delta_{R|A}}(\ModDif{}{S}{A})$ for each $i$. Since $\ModDif{}{S}{A}$ is a smooth $\ZZ$-algebra, by \cite{Bubbles}, $H^i_{\Delta_{R|A}}(\ModDif{}{S}{A})$ has $p$-torsion for only finitely many primes $p$ for each $i$. Thus, the same holds for $H^i_{\Delta_{R|A}}(\ModDif{}{R}{A})$. We can then apply Proposition~\ref{prop-reduction}.
\end{proof}

The main examples of direct summands of polynomial rings are rings of invariants of linearly reductive groups. One generally only discusses linearly reductive groups over a field, rather than over the integers. However, the same classes of rings that occur as invariants of linearly reductive groups in positive characteristic (extensions of tori by finite groups of invertible order) are covered by this theorem.

The author is not aware of a Gorenstein ring of positive characteristic that satisfies the assumptions of Smith and Van den Bergh's results on simplicity of rings of differential operators \cite[Theorems~1.3~and~4.2.1]{SmithVDB} that does not occur as a mod $p$ reduction of a ring satisfying the hypotheses of Theorem~\ref{splitting-reduction}.

\begin{corollary}
If $R$ is a Gorenstein normal semigroup ring over $\ZZ$, then there is an $n\in A$ such that, after replacing $R$ by $R[1/n]$ and $\ZZ$ by $A=\ZZ[1/n]$, there are isomorphisms \[\Diff{}{\overline{R}}{\FF_p}  \cong \Diff{}{R}{A}\otimes_{A} \FF_p, \quad \text{where}\ \overline{R}=R/pR,\] for every prime $p\in \ZZ$ that does not divide $n$.
\end{corollary}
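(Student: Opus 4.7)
The plan is to reduce the statement to a direct application of Theorem~\ref{splitting-reduction}. It suffices to realize $R$ as a graded subring of a polynomial ring over $\ZZ$ that is a direct summand as an $R$-module, for then the Gorenstein hypothesis of the corollary combines with that theorem to produce the conclusion.

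I would set up the combinatorial embedding as follows. Assume $R = \ZZ[M]$ with $M \subseteq \ZZ^d$ a pointed normal affine semigroup (reducing to this case if necessary; see the last paragraph). Let $v_1, \dots, v_r$ be the primitive inward facet normals of the cone $\sigma := \RR_{\geq 0} M \subseteq \RR^d$. The facet evaluation map
\[
  \phi\colon \ZZ^d \longrightarrow \ZZ^r, \qquad m \longmapsto (\langle v_1, m\rangle, \dots, \langle v_r, m\rangle)
\]
is injective (the $v_i$ span $(\QQ^d)^*$ because $\sigma$ is pointed and full-dimensional) and, by normality of $M$, satisfies $\phi^{-1}(\NN^r) = M$. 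Setting $L := \phi(\ZZ^d) \subseteq \ZZ^r$, this identifies $R$ with $\ZZ[L \cap \NN^r] \subseteq S := \ZZ[y_1,\dots,y_r]$. Now grade $S$ by the finitely generated abelian group $G := \ZZ^r/L$ via $\deg(y_i) = \overline{e_i}$: the subring $R$ is precisely the $G$-degree zero part $S_0$, so the projection $\pi\colon S \to R$ onto this component is $R$-linear (multiplication by $R$ preserves the $G$-grading) and splits the inclusion as $R$-modules. A positive linear functional $\psi\colon \ZZ^r \to \ZZ$ with $\psi(e_i) > 0$ induces a compatible $\NN$-grading on $S$ whose restriction to $R$ is strictly positive by pointedness of $M$, and both the inclusion and the splitting are then graded.

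Theorem~\ref{splitting-reduction} now applies to this presentation, producing the integer $n \in \ZZ$ and the desired isomorphism $\Diff{}{\overline{R}}{\FF_p} \cong \Diff{}{R}{A} \otimes_A \FF_p$ for all primes $p \nmid n$. I expect the only real obstacle to be the (standard) combinatorial step, which uses the facet characterization of normal affine semigroups and is where both normality and pointedness enter. The Gorenstein hypothesis is used only to invoke Theorem~\ref{splitting-reduction}. Finally, a non-pointed $M$ decomposes as $M \cong M' \oplus \ZZ^s$ with $M'$ pointed, so $R \cong \ZZ[M'][t_1^{\pm 1}, \dots, t_s^{\pm 1}]$; since the Gorenstein property transfers between $\ZZ[M']$ and this Laurent extension, and rings of differential operators are compatible with localization and polynomial extensions, the conclusion for the pointed case extends to the general case.
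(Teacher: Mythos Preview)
Your proof is correct and follows the same strategy as the paper's: realize $R$ as an $R$-module direct summand of a polynomial ring over $\ZZ$ and then apply Theorem~\ref{splitting-reduction}. The paper simply cites Hochster's results \cite{HochsterToric} on full subsemigroups of $\NN^d$ for the embedding and splitting, whereas you reconstruct that same construction explicitly via facet normals and the $\ZZ^r/L$-grading (and additionally address the non-pointed case, which the paper leaves implicit).
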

\begin{proof}
In the terminology of \cite[\S2]{HochsterToric}, any normal semigroup can be emdedded as a full semigroup $S\subseteq\NN^d$ for some $d$ \cite[p.~323]{HochsterToric}. By \cite[Lemma~1]{HochsterToric}, for any $R$, the semigroup ring $R[S]$ generated by the monomials whose exponent vectors lie in $S$ is a direct summand as an $R[S]$-module of $R[\NN^d]=R[x_1,\dots,x_d]$. In particular, any normal semigroup ring over $\ZZ$ is a direct summand (as a module over itself) of a polynomial ring over $\ZZ$.
\end{proof}

For example, this corollary covers normal hypersurfaces given by a binomial equation, such as $\displaystyle \frac{\ZZ[u,v,w,x,y,z]}{(uvw-xyz)}$, as well as coordinate rings of Segre products $\mathbb{P}^n_{\ZZ} \times_{\ZZ} \mathbb{P}^n_{\ZZ}$.

\begin{corollary}\label{finitesummand}
Let $R$ be a graded subring of $S=\ZZ[x_1,\dots,x_m]$ such that $R$ is a direct summand of $S$ as an $R$-module.  Suppose that $R\otimes_{\ZZ} \QQ \rightarrow S\otimes_{\ZZ} \QQ$ is module-finite and splits as $R\otimes_{\ZZ} \QQ$-modules. Then there is an $n\in \ZZ$ such that, after replacing $R$ by $R[1/n]$ and $\ZZ$ by $A=\ZZ[1/n]$, there are isomorphisms \[\Diff{}{\overline{R}}{\FF_p}  \cong \Diff{}{R}{A}\otimes_{A} \FF_p, \quad \text{where}\ \overline{R}=R/pR,\] for every prime $p\in \ZZ$ that does not divide $n$. 
\end{corollary}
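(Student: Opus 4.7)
The plan is to adapt the proof of Theorem~\ref{splitting-reduction} to the non-Gorenstein setting by combining Proposition~\ref{SVDB-description} with the generic finite-and-split hypothesis to transfer the control of $p$-torsion from the ambient polynomial ring $S$ to $R$. By Proposition~\ref{SVDB-reduction}, it suffices to show that after inverting a suitable integer $n$, the module $\df{1}{R|A}(R)\cong H^1_{\Delta_{R|A}}(\Hom_A(R,R))$ (using Proposition~\ref{SVDB-description}) has no $p$-torsion for any prime $p\in A=\ZZ[1/n]$.

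First I would spread out: fix finitely many $R\otimes_{\ZZ}\QQ$-module generators of $S\otimes_{\ZZ}\QQ$ together with matrices representing an $R\otimes_{\ZZ}\QQ$-linear retraction $\rho_{\QQ}\colon S\otimes_{\ZZ}\QQ\to R\otimes_{\ZZ}\QQ$ of the inclusion, and let $n$ be a common denominator of all the data involved. After passing to $A=\ZZ[1/n]$, the inclusion $\iota\colon R\hookrightarrow S$ becomes module-finite over $A$ and admits an $R$-linear retraction $\rho\colon S\to R$.

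Second, I would use $\rho$ and $\iota$ to realize $\Hom_A(R,R)$ as a $\ModDif{}{R}{A}$-module direct summand of $\Hom_A(S,S)$. The maps $\psi\mapsto \iota\circ\psi\circ\rho$ and $\phi\mapsto \rho\circ\phi\circ\iota$ are each $\ModDif{}{R}{A}$-linear (linearity uses precisely that $\iota$ and $\rho$ are $R$-linear, together with the formula $(r_1\otimes r_2)\cdot\psi=r_1\psi(r_2\cdot -)$), and their composition is $\mathrm{id}_{\Hom_A(R,R)}$ because $\rho\circ\iota=\mathrm{id}_R$. Applying the functor $H^1_{\Delta_{R|A}}(-)$ to this split exhibits $\df{1}{R|A}(R)$ as a direct summand of $H^1_{\Delta_{R|A}}(\Hom_A(S,S))$.

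The main obstacle is the final step: controlling the $p$-torsion of $H^1_{\Delta_{R|A}}(\Hom_A(S,S))$. Since $S$ is smooth over $\ZZ$, the enveloping algebra $\ModDif{}{S}{A}$ is smooth over $A$, and the module-finiteness of $R\to S$ implies that each $\ModDif{n}{R}{A}$ is a finitely generated $\ModDif{}{S}{A}$-module whose presentation data can be bounded uniformly in $n$. Rewriting the local cohomology as the direct limit of $\Ext^1_{\ModDif{}{R}{A}}(\ModDif{n}{R}{A},\Hom_A(S,S))$ and invoking the theorem of Bhatt--Blickle--Lyubeznik--Singh--Zhang~\cite{Bubbles} on $p$-torsion in local cohomology of finitely generated modules over smooth $\ZZ$-algebras yields that each term has $p$-torsion for only finitely many primes $p$, uniformly in $n$. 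The delicate point is precisely this uniformity: it is the module-finiteness of $R\to S$ (and only this) that prevents the set of exceptional primes from growing with $n$. Passing to the limit and applying Proposition~\ref{SVDB-reduction}, after possibly enlarging $n$ to absorb the finitely many exceptional primes, yields the desired isomorphism $\Diff{}{\overline{R}}{\FF_p}\cong \Diff{}{R}{A}\otimes_A \FF_p$ for every prime $p$ not dividing $n$.
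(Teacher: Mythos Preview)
Your first two steps --- spreading out the module-finiteness and the splitting to $A=\ZZ[1/n]$, and exhibiting $\Hom_A(R,R)$ as a $\ModDif{}{R}{A}$-module summand of $\Hom_A(S,S)$ --- are correct. But the paper's proof is much shorter: after the identical spreading-out argument it simply invokes Theorem~\ref{splitting-reduction}. That theorem, through Proposition~\ref{prop-reduction} and Theorem~\ref{functors-field}, identifies the obstruction with local cohomology of $R\otimes_A R$ itself (rather than of $\Hom_A(R,R)$); since $R\otimes_A R$ is a direct summand of the smooth $\ZZ$-algebra $S\otimes_A S$, the result of \cite{Bubbles} applies directly to give finitely many bad primes.

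Your detour through Proposition~\ref{SVDB-description} appears designed to avoid the Gorenstein hypothesis underlying Theorem~\ref{functors-field}, but the final step has a genuine gap. The theorem of \cite{Bubbles} bounds associated primes of $H^i_I(T)$ when $T$ itself is a smooth $\ZZ$-algebra; it says nothing about $H^i_I(M)$ for an arbitrary $T$-module $M$, and $\Hom_A(S,S)$ is not finitely generated over $\ModDif{}{S}{A}$ (it is the full $A$-linear endomorphism ring of a free $A$-module of infinite rank). Your proposed workaround --- rewriting as $\varinjlim_n \Ext^1_{\ModDif{}{R}{A}}(\ModDif{n}{R}{A},\Hom_A(S,S))$ and asserting a uniform bound on exceptional primes --- does not hold up: $\ModDif{n}{R}{A}$ is a quotient of $R\otimes_A R$, and since $R\subseteq S$ (not the reverse) it carries no $\ModDif{}{S}{A}$-module structure at all, so the claim that it is ``a finitely generated $\ModDif{}{S}{A}$-module whose presentation data can be bounded uniformly in $n$'' has no content. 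Module-finiteness of $R\to S$ does not supply the uniformity you need. The reason the paper routes through the Gorenstein identification is precisely that it replaces the unwieldy module $\Hom_A(R,R)$ by the ring $R\otimes_A R$, which sits as a summand inside something smooth where \cite{Bubbles} genuinely applies.
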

\begin{proof} Any fixed set of module generators $\{f_1,\dots,f_t\}$ of $S\otimes_{\ZZ} \QQ$ over $R\otimes_{\ZZ} \QQ$ live inside of $S[1/n]$ for some $n\in \ZZ$; after inverting an element of $\ZZ$, without loss of generality $\{f_1,\dots,f_t\}\subset S$. Applying generic freeness, we can localize further on $\ZZ$ so that $C=S/(\sum R f_i)$ is a free $A=\ZZ[1/n]$-module. Since $C\otimes_{\ZZ} \QQ=0$, we must have $C=0$. Thus, we can assume that $S$ is a finitely generated $R$-module.
	
	Now, let $f_1 \in S$ and $\phi:S\otimes_{\ZZ} \QQ \rightarrow R\otimes_{\ZZ} \QQ$ be such that $\phi(f_1\otimes 1)=1$. Note that the map $\phi$ is determined by the values $\phi(f_i \otimes 1)$. Since there are finitely many, we can again invert an element of $A$ to ensure each of these is contained in $R$. Then $\phi$ descends to a $R$-module splitting of the inclusion of $R$ into $S$. The result then follows from Theorem~\ref{splitting-reduction}.
\end{proof}

We can also apply Proposition~\ref{prop-reduction} to give a conceptual example of a local cohomology module with infinitely many associated primes; the first example of this phenomenon was given by Singh~\cite{Singh-LC}.

\begin{example}\label{assoc-primes}
	Let $\displaystyle R=\frac{\ZZ[x,y,z]}{(x^3+y^3+z^3)}$. In \cite{Smith-D}, Smith showed that for every $p\equiv 1 \, \mathrm{mod} \, 3$, there is a differential operator in $R \otimes_{\ZZ} \FF_p$ that is not the base change of a differential operator in $R$. The same argument with minor modifications works for all prime characteristics other than three:
	
	By \cite{BGG}, the differential operators on $R\otimes_{\ZZ} \CC$ of degree zero are generated (as an algebra) by the Euler operator, which multiplies any homogeneous element by its degree. Since $D_{R|\ZZ}$ is torsionfree over $\ZZ$, we have  $D_{R|\ZZ} \otimes_{\ZZ} \CC \cong D_{R\otimes \CC|\CC}$, so any element $\delta\in[D_{R|\ZZ}]_0$ is given by the rule $\delta(f)=P(\deg(f)) f$ for some polynomial $P\in \ZZ[x]$ for each homogeneous~$f$.
	
	Now, fix a prime $p\neq 3$, and set $\bar{R}=R\otimes \FF_p$. Both $\bar{R}$ and its ring of $p$th powers $\bar{R}^{p}$ are Gorenstein with $a$-invariant zero. Thus, there is a degree-preserving isomorphism ${\Hom_{\bar{R}^{p}}(\bar{R},\bar{R}^{p})\cong \bar{R}}$. In particular, there is a nonzero $\bar{R}^{p}$-linear map $\phi:\bar{R}\rightarrow \bar{R}$ of degree zero, the image of which is an ideal of $\bar{R}^{p}$. By the isomorphism \eqref{eq:poschar} of the introduction, we see that $\phi$ is a differential operator on $\bar{R}$ of degree zero. However, it follows from the previous paragraph that every degree zero element in the image of $D_{R|\ZZ} \otimes \FF_p$ must have as its own image a direct sum of graded pieces of $\bar{R}$. No ideal of $\bar{R}^p$ is of this form, so such a map $\phi$ cannot be the base change of an operator on $R$.
	
We identify $\ModDif{}{R}{\ZZ}$ with $\displaystyle \frac{\ZZ[x,y,z,x',y',z']}{(x^3+y^3+z^3,x'^3+y'^3+z'^3)}$, so that $x$ corresponds to $x\otimes 1$ and $x'$ corresponds to $1 \otimes x'$ in $R\otimes_{\ZZ} R$.
	It follows from Proposition~\ref{prop-reduction} that 
	\[\displaystyle H^3_{(x-x',y-y',z-z')}\left( \frac{\ZZ[x,y,z,x',y',z']}{(x^3+y^3+z^3,x'^3+y'^3+z'^3)}\right)\] has nonzero $p$-torsion for all primes~$p\neq 3$. In particular, this local cohomology module has infinitely many associated primes.
\end{example}

\section*{Acknowledgements}

The author is grateful to Alessandro De Stefani, Elo{\'i}sa Grifo, Mel Hochster, Jonathan Monta\~{n}o, Luis N{\'u}{\~n}ez-Betancourt, Anurag Singh, Karen Smith, Nick Switala, and Bernd Ulrich for many helpful and enjoyable discussions. We also thank the anonymous referees for helpful comments and suggestions that greatly improved the presentation. The author was partially supported by NSF Grant DMS~\#1606353.

\end{document}